\documentclass[11pt,english]{article}
\usepackage[T1]{fontenc}
\usepackage[latin9]{inputenc}
\pagestyle{plain}
\setcounter{secnumdepth}{1}
\setcounter{tocdepth}{1}
\usepackage{amsmath}
\usepackage{amssymb}
\usepackage{graphicx}
\usepackage{wasysym}

\makeatletter
\usepackage{amsfonts}
\usepackage{amsmath}
\usepackage{amsthm}
\usepackage[margin=1.5in]{geometry}
\usepackage{amsfonts}\usepackage{amsthm}\usepackage{color}\usepackage{enumitem}\usepackage{bm}\usepackage{cancel}\usepackage{thmtools}

\setcounter{MaxMatrixCols}{10}
\date{}

\def\theenumi{\arabic{enumi}}

\def\theenumii{\alph{enumii}}
\def\p@enumii{\theenumi.}

\def\theenumiii{\arabic{enumiii}}
\def\p@enumiii{(\theenumi)(\theenumii)}

\def\p@enumiv{\p@enumiii.\theenumiii}
\pagestyle{plain}

\newtheorem{theorem}{Theorem}[section]

\newtheorem{corollary}[theorem]{Corollary}

\newtheorem{lemma}[theorem]{Lemma}

\newtheorem{proposition}[theorem]{Proposition}

\theoremstyle{definition}
\newtheorem{acknowledgement*}[theorem]{Acknowledgement}
\newtheorem{definition}[theorem]{Definition}

\newtheorem{remark}[theorem]{Remark}
\newtheorem{example}[theorem]{Example}

\newcommand{\id}{\mathrm{id}}

\makeatother

\usepackage{babel}
\begin{document}

\title{On groups, slow heat kernel decay yields Liouville property and sharp
entropy bounds}

\author{\textsc{Yuval Peres and Tianyi Zheng}}
\maketitle
\begin{abstract}
Let $\mu$ be a symmetric probability measure of finite entropy on
a group $G$. We show that if $-\log\mu^{(2n)}(\id)=o(n^{1/2})$,
then the pair $(G,\mu)$ has the Liouville property (all bounded $\mu$-harmonic
functions on $G$ are constant). Furthermore, if $-\log\mu^{(2n)}(\id)=O(n^{\beta})$
where $\beta\in(0,1/2)$, then the entropy of the $n$-fold convolution
power $\mu^{(n)}$ satisfies $H(\mu^{(n)})=O\left(n^{\frac{\beta}{1-\beta}}\right)$.
These results improve earlier work of Gournay \cite{Gournay2014},
Saloff-Coste and the second author \cite{Saloff-Coste2014}. We illustrate
the sharpness of the bounds on a family of groups.
\end{abstract}

\section{Introduction}

Let $G$ be a finitely generated infinite group equipped with a generating
set $S$, and let $\mu$ be a probability measure on $G$. Denote
by $\left|g\right|$ the word distance of $g$ from the identity on
the Cayley graph induced by $S$. Let $X_{1},X_{2},\ldots$ be a sequence
of i.i.d.\ random variables with distribution $\mu$, so $W_{n}=X_{1}\cdots X$
is the random walk on $G$ with step distribution $\mu$. The law
of $W_{n}$ is the $n$-fold convolution power $\mu^{(n)}$. The return
probability of the $\mu$-random walk to the identity $\id$ after
$2n$ steps is 
\[
\mathbf{P}\left(W_{2n}=\mathrm{id}\right)=\mu^{(2n)}(\id).
\]
 The Shannon entropy of $W_{n}$ is 
\[
H_{\mu}(n)=H(W_{n})=-\sum_{x\in G}\mu^{(n)}(x)\log\mu^{(n)}(x).
\]
The rate of escape\emph{ }of $W_{n}$ is 
\[
L_{\mu}(n)=\mathbf{E}\left|X_{n}\right|=\sum_{x\in G}|x|\mu^{(n)}(x).
\]
The pair $\left(G,\mu\right)$ has the \emph{Liouville property} if
all bounded $\mu$-harmonic functions on $G$ are constant. By classical
work of Avez \cite{Avez1976}, Derrienic \cite{Derriennic1980} and
Kaimanovich-Vershik \cite{KV}, for $\mu$ with finite entropy $H_{\mu}(1)<\infty$,
the pair $(G,\mu)$ has the Liouville property if and only if the
Avez \emph{asymptotic entropy} $h_{\mu}=\lim_{n\to\infty}\frac{H_{\mu}(n)}{n}$
is $0$. We say a probability measure $\mu$ on $G$ is \emph{symmetric}
if $\mu(g)=\mu(g^{-1})$ for all $g\in G$. 

Our goal in this article is to show a link between the decay of the
return probability and the growth of entropy of a symmetric random
walk on $G$. Namely, we derive an upper bound on $H_{\mu}(n)$ from
a lower bound on $\mu^{(2n)}(\id)$, provided that $\mu^{(2n)}(\id)$
decays sufficiently slowly. 

\begin{theorem}\label{liouville}

Suppose $\mu$ is a symmetric probability measure of finite entropy
on $G$ such that 
\[
\mu^{(2n)}(\id)\ge\exp\left(-\gamma(n)\right)
\]
where $\gamma:[1,\infty)\to\mathbb{R}_{+}$ is an unbounded function
such that both $\gamma(n)$ and $n^{\frac{1}{2}}/\gamma(n)$ are increasing,
that satisfies 
\[
\lim_{n\to\infty}\frac{\gamma(n)}{n^{\frac{1}{2}}}=0.
\]
 Then $(G,\mu)$ has the Liouville property. 

\end{theorem}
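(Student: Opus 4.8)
The plan is to reduce the Liouville property to a quantitative ceiling on entropy growth. By the Avez--Kaimanovich--Vershik criterion recalled in the introduction, for $\mu$ of finite entropy the pair $(G,\mu)$ is Liouville if and only if the Avez entropy $h_\mu=\lim_n H_\mu(n)/n$ vanishes, so it suffices to show $H_\mu(n)=o(n)$. I would in fact aim for the stronger scale-by-scale estimate $H_\mu(n)\le C\,\inf_{1\le T\le n}\big(\tfrac{n}{T}\gamma(T)+R(T)\big)$ for a suitable correction $R(T)$; after optimizing the free auxiliary time scale $T$ this collapses to $o(n)$ precisely when $\gamma(n)=o(n^{1/2})$, and to the sharp power $n^{\beta/(1-\beta)}$ under the hypothesis of the companion bound.

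First I would record the consequences of symmetry and slow decay. Symmetry gives $\mu^{(2n)}(\id)=\sum_x \mu^{(n)}(x)^2=\|\mu^{(n)}\|_2^2$, so the hypothesis is a lower bound on the $\ell^2$ mass; since $\gamma(n)=o(n)$ the spectral radius equals $1$, and Carne--Varopoulos furnishes the Gaussian pointwise tail $\mu^{(n)}(x)\le 2\exp(-|x|^2/2n)$. I would then pass to continuous time and use the entropy-production identity $\frac{d}{dt}H(p_t)=\mathcal E(p_t,\log p_t)$ alongside the companion identity $-\frac{d}{dt}\log\|p_t\|_2^2=2\mathcal E(p_t,p_t)/\|p_t\|_2^2$ governing the return probability. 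The crux is to split the Dirichlet form of $\log p_t$ into a bulk part, where $p_t$ exceeds a threshold tied to $\mu^{(2T)}(\id)$ and the logarithmic derivative is controlled, and a tail part, where Carne--Varopoulos forces $\log(1/p_t)\gtrsim |x|^2/t$ and renders the contribution summable. Balancing the threshold against the return probability at scale $T$ and integrating produces the two-parameter inequality; the monotonicity of both $\gamma$ and $n^{1/2}/\gamma(n)$ is exactly what makes the optimization over $T$ well posed and drives the optimal scale into the subdiffusive window.

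The main obstacle is that I am after an \emph{upper} bound on Shannon entropy, whereas the return probability only natively controls it from below: the collision (order-$2$ R\'enyi) entropy satisfies $-\log\mu^{(2n)}(\id)=H_2(\mu^{(n)})\le H(\mu^{(n)})$, so the hypothesis by itself says nothing about how large $H(\mu^{(n)})$ can be. The whole difficulty is to convert $\ell^2$ information into a genuine ceiling on Shannon entropy, and this cannot be done at a single time scale --- the lamplighter groups $\mathbb{Z}_2\wr\mathbb{Z}^d$ with $d\ge 3$ have subexponential return decay yet linear entropy --- so one must exploit the comparison across scales together with the strict threshold $\gamma(n)=o(n^{1/2})$. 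Controlling the tail of the Dirichlet form via Carne--Varopoulos, and checking that the bulk threshold can be chosen so the error terms stay below the subdiffusive barrier, is where the real work lies; the sharpness of the $n^{1/2}$ threshold is witnessed by the same wreath-product family, for which $-\log\mu^{(2n)}(\id)\asymp n^{d/(d+2)}$ crosses $n^{1/2}$ exactly at $d=2$.
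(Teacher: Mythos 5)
Your reduction to $h_\mu=0$ and your diagnosis of the central difficulty (that $-\log\mu^{(2n)}(\id)$ is the order-$2$ R\'enyi entropy and hence only a \emph{lower} bound for $H(\mu^{(n)})$, so single-scale information cannot suffice) are both correct and match the paper's starting point. But the mechanism you propose for closing the gap does not work, for two concrete reasons. First, the theorem assumes only that $\mu$ is symmetric with finite entropy; the Carne--Varopoulos bound $\mu^{(n)}(x)\le 2\exp(-|x|^2/2n)$ is a statement about finitely supported (nearest-neighbour) walks and fails for general $\mu$ of finite entropy --- for a heavy-tailed symmetric $\mu$ there is no Gaussian off-diagonal decay at all --- so your ``tail part'' of the Dirichlet form of $\log p_t$ has no control. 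Second, and more fundamentally, the ``bulk part'' is not controlled either: $\mathcal{E}(p_t,\log p_t)=\frac12\sum_{x,y}\bigl(p_t(xy)-p_t(x)\bigr)\log\frac{p_t(xy)}{p_t(x)}\,\mu(y)$ involves the ratios $p_t(xy)/p_t(x)$, and a lower bound on $\mu^{(2t)}(\id)$ gives no Harnack-type control on these ratios even where $p_t$ exceeds your threshold. Finally, the master inequality $H_\mu(n)\le C\inf_{T}\bigl(\frac{n}{T}\gamma(T)+R(T)\bigr)$ is asserted but never derived, and the correction term $R(T)$ --- which must carry essentially all the content, since at $T=n$ the first term is just $\gamma(n)=o(n^{1/2})$, far below the true entropy $\simeq n^{1/2}\log n$ of $\mathbb{Z}\wr\mathbb{Z}$ --- is left unspecified, so the optimization that is supposed to produce $o(n)$ cannot be checked.

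The paper's proof rests on a device entirely absent from your sketch. Coulhon's inequality converts the hypothesis into a spectral profile bound $\Lambda_\mu\bigl(8e^{\gamma(n)}\bigr)\le\frac{\gamma(n)+\log 8}{2n}$ (Lemma \ref{Lambda-upper}); one then takes near-optimal test functions $\phi$ supported on sets $U_k$ with $|U_k|=\lceil e^{e^k}\rceil$, forms the $1$-coboundary $\sigma(g)=\tau_g\phi-\phi$, and applies the Naor--Peres Markov type $2$ inequality to obtain $\mathbf{P}(W_n\notin U_k^{-1}U_k)\le n\lambda_\mu(U_k)$ (Lemma \ref{tail}). Choosing the scale $k=\ell_n$ dictated by $\varphi(x)=\gamma^{-1}(x)/x^2$ shows that $W_n$ lies in a set of size $e^{o(n)}$ with probability tending to $1$, which contradicts Shannon's theorem if $h_\mu>0$. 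Some replacement for this concentration step --- a way of converting the $\ell^2$ spectral information into localization of the walk on subexponentially many group elements --- is the missing idea in your proposal.
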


The decay of the return probability enjoys good stability properties,
in particular the asymptotic decay of return probability of simple
random walk is a quasi-isometry invariant, see Pittet and Saloff-Coste
\cite{PSCstab}. However, it remains a major open problem whether
the Liouville property is stable under changing the generating set
of the group. We deduce from Theorem \ref{liouville} the following
corollary regarding stability of the Liouville property provided that
$\mu^{(2n)}(\id)$ decays slower than $\exp\left(-n^{1/2}\right)$.
We say a probability measure $\mu$ on $G$ has finite second moment
if $\sum_{g\in G}|g|^{2}\mu(g)<\infty$. 

\begin{corollary}\label{stability}

Suppose $G$ is a finitely generated group such that for some symmetric
probability measure $\mu$ with finite generating support on $G$,
$\mu^{(2n)}(\id)$ satisfies the assumptions in Theorem \ref{liouville}.
Let $\Gamma$ be a finitely generated group that is quasi-isometric
to $G$. Then $(\Gamma,\eta)$ has the Liouville property for any
symmetric probability measure $\eta$ of finite second moment on $\Gamma$.

\end{corollary}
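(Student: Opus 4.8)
The plan is to avoid transferring the Liouville property directly across the quasi-isometry — which, as the paper stresses, is not known to be stable — and instead to transfer the \emph{hypothesis} of Theorem \ref{liouville}, namely the slow decay of the return probability, from $(G,\mu)$ to $(\Gamma,\eta)$, and then re-apply Theorem \ref{liouville} on $\Gamma$. There are two transfer steps: a quasi-isometry step between the groups, and a change-of-measure step on $\Gamma$ itself.

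First I would invoke the Pittet--Saloff-Coste stability result \cite{PSCstab}. Since $G$ and $\Gamma$ are quasi-isometric and $\mu$ is symmetric, finitely supported and generating, there is a symmetric, finitely supported, generating measure $\nu$ on $\Gamma$ whose return probability is $\approx$-equivalent to $\mu^{(2n)}(\id)$. Unwinding the equivalence $\phi\preceq\psi \iff \phi(n)\le C\psi(Cn)$ yields, for constants $c\in(0,1]$ and $C\ge 1$, the lower bound $\nu^{(2n)}(\id)\ge c\,\mu^{(2\lceil n/C\rceil)}(\id)\ge c\exp\!\left(-\gamma(\lceil n/C\rceil)\right)$, where monotonicity of $\gamma$ is used to compare $\gamma(n/C)$ with $\gamma(\lceil n/C\rceil)$. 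Set $\hat\gamma(n)=\log(1/c)+\gamma(\lceil n/C\rceil)$.

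Next I would compare Dirichlet forms on $\Gamma$ to pass from the finitely supported $\nu$ to the arbitrary finite-second-moment measure $\eta$. Writing each $g\in\Gamma$ as a product of $|g|$ generators from the support of $\nu$ and applying a Cauchy--Schwarz telescoping estimate, then summing over translates and over $g$ weighted by $\eta(g)$, gives $\mathcal{E}_\eta\le A\,\mathcal{E}_\nu$ on $\ell^2(\Gamma)$ with $A=C'\sum_{g}|g|^2\eta(g)<\infty$ — finite precisely because $\eta$ has finite second moment. A smaller Dirichlet form means slower decay of the heat kernel, and the standard comparison lemma converts $\mathcal{E}_\eta\le A\,\mathcal{E}_\nu$ into a lower bound of the form $\eta^{(2n)}(\id)\ge c'\,\nu^{(2\lceil An\rceil)}(\id)$. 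Chaining with the first step produces $\eta^{(2n)}(\id)\ge\exp\!\left(-\tilde\gamma(n)\right)$ with $\tilde\gamma(n)=b+\gamma(\lceil A'n\rceil)$ for suitable constants $b\ge 0$ and $A'\ge 1$.

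It then remains to check that $\tilde\gamma$ still lies in the admissible class of Theorem \ref{liouville}: it is unbounded and increasing, and $\tilde\gamma(n)/n^{1/2}=\sqrt{A'}\,\gamma(A'n)/(A'n)^{1/2}+o(1)\to 0$ since $\gamma(n)/n^{1/2}\to 0$. For the requirement that $n^{1/2}/\tilde\gamma(n)$ be increasing, I would absorb the additive constant $b$ by passing to a slightly larger $\tilde\gamma$ on the tail (legitimate because $\gamma\to\infty$ makes $b$ asymptotically negligible and a larger $\gamma$ only weakens the return-probability hypothesis), reducing to the monotonicity of $n^{1/2}/\gamma(A'n)$, which follows from that of $n^{1/2}/\gamma(n)$ after rescaling. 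Applying Theorem \ref{liouville} to $(\Gamma,\eta)$ then yields the Liouville property. The main obstacle is conceptual rather than computational: since the Liouville property cannot be moved across the quasi-isometry, the whole argument hinges on both transfer estimates delivering lower bounds of the \emph{same slow type} and on the class of admissible $\gamma$ being stable under the resulting rescalings; the finite-second-moment hypothesis on $\eta$ is exactly what keeps the constant $A$ in the Dirichlet comparison finite, and the argument fails without it.
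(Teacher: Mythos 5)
Your proposal is correct and takes essentially the same route as the paper: the paper's proof simply cites the Pittet--Saloff-Coste stability results (equivalence of return probability decay under quasi-isometry and under change to any symmetric generating measure of finite second moment) and then applies Theorem \ref{liouville} to $(\Gamma,\eta)$. You merely unfold the second citation into its standard Dirichlet-form comparison proof and add the (routine, and in the paper implicit) verification that the rescaled $\gamma$ remains in the admissible class.
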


Simple random walk on the lamplighter group over the two-dimensional
lattice $G=\mathbb{Z}_{2}\wr\mathbb{Z}^{2}$ satisfies\footnote{Given two monotone functions $\phi,\psi$, write $\phi\apprge\psi$
if there are constants $c_{1},c_{2}\in(0,\infty)$, such that $\phi(t)\ge c_{1}\psi(c_{2}t)$
(using integer values if $\phi,\psi$ are defined on $\mathbb{N}$).
We write $\psi\simeq\phi$ if both $\phi\apprge\psi$ and $\psi\apprge\phi$
holds.} $\mu^{(2n)}(\id)\simeq\exp(-n^{1/2})$ and $H_{\mu}(n)\simeq n/\log n$,
see \cite{Erschlerdrift,Erschler2006,Pittet2002}. This example is
just beyond the limit of application of Theorem \ref{liouville}.
Kotowski and Virág \cite{Kotowski2015} analyzed a group $G$ on which
simple random walk satisfies $\mu^{(2n)}(\id)\gtrsim\exp(-n^{1/2+o(1)})$
and the entropy $H_{\mu}(n)$ has linear growth. The Kotowski-Virág
example shows that the exponent $1/2$ is the critical value in the
setting of Theorem \ref{liouville}. It is an interesting open problem
whether $\mu^{(2n)}(\id)\gtrsim\exp\left(-n^{\frac{1}{2}}\right)$
implies that $(G,\mu)$ has the Liouville property. 

By a result of Kaimanovich \cite{Kaimanovich1991}, if $\mu$ is a
symmetric probability measure with finite first moment on a polycyclic
group $G$, then $(G,\mu)$ has the Liouville property. By Alexopoulos
\cite{Alex}, Coulhon-Grigor'yan-Pittet \cite[Theorem 7.10]{CGP},
for simple random walk on a polycyclic group of exponential volume
growth, the return probability satisfies $\mu^{(2n)}(\id)\simeq\exp\left(-n^{1/3}\right)$.
As a corollary of Theorem \ref{liouville}, we extend the aforementioned
result of Kaimanovich to groups where the return probability of simple
random walk decays no faster than $\exp\left(-n^{1/3}\right)$. 

\begin{corollary}\label{first-m}

Suppose $G$ is a finitely generated group such that for some symmetric
probability measure $\mu$ with finite generating support on $G$,
\[
\mu^{(2n)}(\id)\gtrsim\exp\left(-n^{1/3}\right).
\]
Then $(G,\eta)$ has the Liouville property for every symmetric probability
measure $\eta$ on $G$ with finite first moment $\sum_{g\in G}|g|\eta(g)<\infty$.

\end{corollary}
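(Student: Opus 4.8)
The plan is to apply Theorem~\ref{liouville} to $\eta$ itself rather than to $\mu$; since the Liouville property is not known to transfer between measures on a fixed group, I cannot simply deduce the statement for $\eta$ from the statement for $\mu$, and must instead produce a genuine return-probability lower bound for $\eta$. Concretely, it suffices to exhibit a function $\gamma$ meeting the hypotheses of Theorem~\ref{liouville} (unbounded, with $\gamma(n)$ and $n^{1/2}/\gamma(n)$ increasing and $\gamma(n)=o(n^{1/2})$) for which $\eta^{(2n)}(\id)\ge\exp(-\gamma(n))$. After replacing $\mu$ and $\eta$ by lazy versions (changing neither the Liouville property, the moment conditions, nor the exponents in the return probability), I pass to the continuous-time semigroups $e^{-tH_\eta},e^{-tH_\mu}$, where $\mathcal{E}_\nu(f,f)=\langle H_\nu f,f\rangle$, and where $p^\nu_t(\id,\id)$ and $\nu^{(2n)}(\id)$ agree up to the usual $\simeq$ equivalence.

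The engine is a comparison of Dirichlet forms. Since $\mu$ is supported on a finite generating set $S$ with $c:=\min_{s\in S}\mu(s)>0$, telescoping any $g$ along a geodesic word in $S$ and using translation invariance gives $\sum_x|f(x)-f(xg)|^2\le c^{-1}|g|^2\,\mathcal{E}_\mu(f,f)$ for all $f$. Writing $M_1=\sum_g|g|\eta(g)<\infty$ for the first moment and $B_R=\{g:|g|\le R\}$, I split $\eta=\eta|_{B_R}+\eta|_{B_R^c}$ at a truncation radius $R$: the inner part contributes, via the telescoping bound and $\sum_{|g|\le R}|g|^2\eta(g)\le R\,M_1$, a term $\le A_R\,\mathcal{E}_\mu$ with $A_R\simeq R$; while the tail is handled crudely by $\sum_x|f(x)-f(xg)|^2\le 4\|f\|_2^2$, giving $\le B_R\|f\|_2^2$ with $B_R=2\,\eta(B_R^c)$. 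Thus $\mathcal{E}_\eta\le A_R\,\mathcal{E}_\mu+B_R\|\cdot\|_2^2$. The standard form-comparison lemma \cite{PSCstab}---proved by feeding the Følner-type test functions that witness the lower bound for $\mu$ into the elementary inequality $p^\eta_t(\id,\id)\ge\|f\|_2^2\,\|f\|_1^{-2}\exp(-t\,\mathcal{E}_\eta(f,f)/\|f\|_2^2)$, the additive $B_R\|\cdot\|_2^2$ merely producing a killing factor $e^{-B_R t}$---then yields $p^\eta_t(\id,\id)\gtrsim e^{-B_R t}\,p^\mu_{A_R t}(\id,\id)\gtrsim\exp\!\big(-B_R t-(A_R t)^{1/3}\big)$, using the hypothesis on $\mu$.

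It remains to optimize over $R=R(t)$, and here the decisive point is that the hypothesis exponent $1/3$ is exactly critical. If one uses only $\eta(B_R^c)\le M_1/R$ (Markov), the balance $R\simeq t^{1/2}$ makes both $B_R t$ and $(A_R t)^{1/3}$ of order $t^{1/2}$, which merely meets---rather than beats---the threshold of Theorem~\ref{liouville}. The extra room comes from finiteness of the first moment: writing $\varepsilon_R:=\sum_{|g|>R}|g|\,\eta(g)$, the tail of a convergent series satisfies $\varepsilon_R\to0$, and $\eta(B_R^c)\le\varepsilon_R/R$, so $B_R\le 2\varepsilon_R/R$ with $\varepsilon_R=o(1)$. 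Choosing $R(t)=t^{1/2}/\omega(t)$, the main term becomes $(A_R t)^{1/3}\simeq t^{1/2}/\omega(t)^{1/3}$ and the tail term becomes $B_R t\lesssim \varepsilon_{R(t)}\,\omega(t)\,t^{1/2}$. A diagonal choice of a slowly increasing $\omega(t)\to\infty$ with $\omega(t)=o(t^{1/2})$ and $\varepsilon_{R(t)}\,\omega(t)\to0$---which exists for any $\varepsilon_R\downarrow0$---renders both terms $o(t^{1/2})$. Hence $\eta^{(2n)}(\id)\ge\exp(-\gamma(n))$ with $\gamma(n)=o(n^{1/2})$; passing to a monotone majorant of the form $n^{1/2}/\tilde\omega(n)$ arranges the monotonicity requirements on $\gamma$ and on $n^{1/2}/\gamma$, and Theorem~\ref{liouville} applied to $\eta$ finishes the proof.

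The main obstacle is precisely this final optimization: the second-moment comparison (which would give $A_R$ constant and hence $\exp(-Cn^{1/3})$ immediately, as in Corollary~\ref{stability}) is unavailable, and the crude first-moment truncation only reaches the borderline $n^{1/2}$. The substance of the argument is that finite first moment supplies exactly the $o(1)$ gain $\varepsilon_R\to0$ needed to cross strictly below the critical exponent, together with the verification that the form-comparison lemma tolerates the additive $\|\cdot\|_2^2$ perturbation through the factor $e^{-B_R t}$. One should also confirm the routine passages---laziness, the discrete/continuous-time equivalence of the return-probability exponents for finitely supported symmetric walks, and that the Følner test sets witnessing $\mu^{(2n)}(\id)\gtrsim\exp(-n^{1/3})$ are furnished by the hypothesis via the Coulhon--Grigor'yan--Pittet correspondence \cite{CGP}.
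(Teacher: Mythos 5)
Your proof is correct, and its engine is exactly the paper's: truncate $\eta$ at a radius $R$, control the inner part by the pseudo-Poincar\'e inequality for the finitely supported $\mu$ (whose hypothesis yields $\Lambda_{\mu}(e^{r})\lesssim r^{-2}$ via Lemma \ref{Lambda-upper}), and extract the decisive $o(1)$ gain from the finite first moment of $\eta$, namely $R^{-1}\sum_{|g|\le R}|g|^{2}\eta(g)\to0$ and $R\,\eta(\{|g|>R\})\to0$. Where you diverge is in the final packaging. The paper never produces a return-probability lower bound for $\eta$: it observes that the proof of Theorem \ref{liouville} only consumes a spectral-profile estimate, so it stops once it has shown $r\Lambda_{\eta}(e^{r})\to0$ and feeds that directly into the argument of Section \ref{sec:entropy}. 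You instead convert the Dirichlet-form comparison $\mathcal{E}_{\eta}\le A_{R}\mathcal{E}_{\mu}+B_{R}\|\cdot\|_{2}^{2}$ into a heat-kernel lower bound $\eta^{(2n)}(\id)\ge\exp(-o(n^{1/2}))$ via the test-function inequality $p_{2t}(\id,\id)\ge\|f\|_{2}^{2}\|f\|_{1}^{-2}\exp(-2t\,\mathcal{E}_{\eta}(f)/\|f\|_{2}^{2})$, and then invoke Theorem \ref{liouville} as a black box for $\eta$. This costs you an extra lemma, a three-parameter optimization (support size $e^{r}$, truncation radius $R$, time $t$) in place of the paper's single coupled radius, and the bookkeeping over the monotonicity hypotheses on $\gamma$ --- all of which you handle correctly; the diagonal choice of $\omega(t)\to\infty$ with $\varepsilon_{R(t)}\omega(t)\to0$ is precisely the right way to land strictly below the critical exponent rather than on it. What your route buys is a statement-level deduction: you establish the stronger intermediate fact that $\eta$ itself satisfies the hypotheses of Theorem \ref{liouville}, which the paper's shortcut does not literally prove.
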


So far all known examples of groups that satisfy the assumption of
Corollary \ref{first-m} belong to the class of \emph{geometrically
elementary solvable groups }(GES), defined in Tessera \cite{Tessera2013}.
See Section \ref{sec:escape} for the more details.

When the decay of the return probability $\mu^{(2n)}(\id)$ is much
slower than $\exp\left(-n^{\frac{1}{2}}\right)$, we have the following
explicit entropy upper bound. It improves a bound from \cite{Saloff-Coste2014}.

\begin{theorem}\label{bound-1}

Let $\mu$ be a symmetric probability measure of finite entropy on
$G$. Suppose there exists constants $C>0$, $\beta\in\left(0,\frac{1}{2}\right)$
such that 
\[
\mu^{(2n)}(\id)\ge\exp\left(-Cn^{\beta}\right).
\]
Then there exists a constant $C_{1}=C_{1}(\beta,C)$ such that 
\[
H_{\mu}(n)\le C_{1}n^{\frac{\beta}{1-\beta}}.
\]

\end{theorem}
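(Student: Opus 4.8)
The plan is to bound $H_\mu(n)=H(\mu^{(n)})$ by a cross-entropy against the heat kernel at a much larger time and then to optimize that time. Set $\Lambda(m)=-\log\mu^{(2m)}(\id)$, so the hypothesis reads $\Lambda(m)\le Cm^{\beta}$ for all $m$, and, as $\beta<\tfrac12$, $m^{1/2}/\Lambda(m)$ is increasing. Since relative entropy is non-negative, $H(\mu^{(n)})\le-\sum_{x}\mu^{(n)}(x)\log q(x)$ for every probability measure $q$; taking $q=\mu^{(2M)}$ gives
\[
H_\mu(n)\le\mathbf{E}\bigl[-\log\mu^{(2M)}(W_n)\bigr].
\]
I would estimate the right-hand side by splitting $G$ into a diffusive bulk $\{|x|\le r_M\}$ with $r_M\asymp\sqrt{M}$ and its complement, choosing $M\asymp n^{1/(1-\beta)}$ at the end.

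On the bulk the point is a near-diagonal lower bound $\mu^{(2M)}(x)\ge\tfrac12\mu^{(2M)}(\id)$ for $|x|\le c\sqrt{M}$, which gives $-\log\mu^{(2M)}(x)\le\Lambda(M)+\log2$ there. I would derive it from a heat-kernel gradient estimate: telescoping $\delta_{\id}-\delta_x$ along a geodesic and using that right translations are unitary on $\ell^2(G)$ yields $\|\mu^{(M)}*(\delta_{\id}-\delta_x)\|_2\le|x|\max_{s}\|\mu^{(M)}*(\delta_{\id}-\delta_s)\|_2$, while a direct computation gives $\|\mu^{(M)}*(\delta_{\id}-\delta_y)\|_2^2=2\bigl(\mu^{(2M)}(\id)-\mu^{(2M)}(y)\bigr)$. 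Hence it suffices to show the one-generator bound $\mu^{(2M)}(\id)-\mu^{(2M)}(s)\le(C/M)\mu^{(2M)}(\id)$, which in turn follows (via $\mu^{(2M)}(\id)-\mu^{(2M+2)}(\id)=\sum_{s'}\mu^{(2)}(s')\bigl(\mu^{(2M)}(\id)-\mu^{(2M)}(s')\bigr)$ and the spectral calculus) from the temporal estimate $\mu^{(2M)}(\id)-\mu^{(2M+2)}(\id)\le(C/M)\mu^{(2M)}(\id)$; this last step is exactly where the regularity of $\Lambda$ enters. Feeding the near-diagonal bound into $\sum_{|x|\le c\sqrt M}\mu^{(2M)}(x)^2\le\mu^{(4M)}(\id)$ also yields the volume bound $\log|B(c\sqrt M)|\lesssim\Lambda(M)$, i.e. the sub-exponential growth $\log|B(r)|\lesssim r^{2\beta}$ forced by $\beta<\tfrac12$.

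For the complement I would combine the Carne--Varopoulos bound $\mu^{(n)}(x)\le2\exp\bigl(-\tfrac{|x|^2}{2n}\bigr)$ with this volume estimate: since at most $\exp(C'k^{2\beta})$ points lie at distance $k$ and $2\beta<2$, the quantity $\sum_{|x|>r}|x|\,\mu^{(n)}(x)$ becomes super-exponentially small in $n^{\beta/(1-\beta)}$ once $r\gtrsim n^{1/(2(1-\beta))}$; on the complement I would bound $-\log\mu^{(2M)}(x)$ by the crude estimate $\Lambda(M)+|x|\log(1/\varepsilon)$, with $\varepsilon=\min_{s}\mu(s)$, whose contribution is then negligible. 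With $M\asymp n^{1/(1-\beta)}$ the bulk radius $r_M\asymp n^{1/(2(1-\beta))}$ exceeds the effective range of $\mu^{(n)}$, so the tail drops out and the bulk contributes $\Lambda(M)+O(1)\lesssim M^{\beta}\asymp n^{\beta/(1-\beta)}$, as claimed. The exponent is the fixed point of $\alpha\mapsto\beta(1+\alpha)$, reflecting that one is evaluating $\Lambda$ at the stretched time $M\asymp n\,H_\mu(n)$.

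The main obstacle is the near-diagonal lower bound at the full diffusive radius $\sqrt{M}$ using only the return-probability hypothesis. No volume-doubling or parabolic Harnack inequality is available, and the naive geodesic-insertion bound $\mu^{(2M)}(x)\ge\varepsilon^{|x|}\mu^{(2M-|x|)}(\id)$ is far too lossy: it reintroduces the rate of escape $L_\mu(n)$, which genuinely exceeds $n^{\beta/(1-\beta)}$ (already on $\mathbb{Z}^d$, where $L_\mu(n)\asymp\sqrt n$ while the target exponent can be made arbitrarily small by taking $\beta$ small). Making the gradient estimate quantitative enough to reach radius $\asymp\sqrt M$, so that the optimal choice $M\asymp n^{1/(1-\beta)}$ is admissible and the tail is absorbed by the derived sub-exponential volume growth, is the crux; it is precisely here that the monotonicity and regularity of $\Lambda$ (here $\Lambda(m)=Cm^{\beta}$ with $\beta<\tfrac12$) are indispensable.
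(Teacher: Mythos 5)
Your plan has a genuine gap at the step you yourself flag as the crux: the near-diagonal lower bound $\mu^{(2M)}(x)\ge\tfrac12\mu^{(2M)}(\id)$ for $|x|\le c\sqrt{M}$. Run your own gradient argument quantitatively. Log-convexity of $m\mapsto\mu^{(2m)}(\id)$ plus the hypothesis gives the temporal estimate $\mu^{(2M)}(\id)-\mu^{(2M+2)}(\id)\le C M^{\beta-1}\mu^{(2M)}(\id)$, hence the one-generator bound $\mu^{(2M)}(\id)-\mu^{(2M)}(s)\le C' M^{\beta-1}\mu^{(2M)}(\id)$, and telescoping along a geodesic yields $\mu^{(2M)}(\id)-\mu^{(2M)}(x)\le C'|x|^{2}M^{\beta-1}\mu^{(2M)}(\id)$. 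This is non-vacuous only for $|x|\lesssim M^{(1-\beta)/2}$; beyond that radius the method gives no lower bound on $\mu^{(2M)}(x)$ whatsoever, so $-\log\mu^{(2M)}(W_n)$ is uncontrolled there. Meanwhile your tail estimate (Carne--Varopoulos against the volume bound $\log|B(r)|\lesssim r^{2\beta}$) only becomes effective at radius $r\gtrsim n^{1/(2(1-\beta))}=\sqrt{M}$; at radius $n^{1/2}=M^{(1-\beta)/2}$ the volume factor $e^{C r^{2\beta}}=e^{Cn^{\beta}}$ swamps the Gaussian factor $e^{-r^2/2n}=e^{-1/2}$. So the bulk where you control the cross-entropy and the region where you control the tail do not meet, and enlarging $M$ to close the gap (one needs $M\gtrsim n^{1/(1-\beta)^2}$) destroys the exponent: the bulk then contributes $M^{\beta}=n^{\beta/(1-\beta)^2}$. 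There is also a generality problem: Carne--Varopoulos, the constant $\min_s\mu^{(2)}(s)$ in the one-generator bound, and your crude tail bound with $\varepsilon=\min_s\mu(s)$ all require $\mu$ to have finite support, whereas the theorem assumes only finite entropy.

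The paper avoids the near-diagonal estimate entirely by not using balls. From Coulhon's inequality, the return probability lower bound gives an upper bound on the spectral profile, $\Lambda_\mu(8e^{\gamma(n)})\le(\gamma(n)+\log 8)/(2n)$, which produces for each $k$ a set $U_k$ of size $e^{e^k}$ with small Dirichlet eigenvalue. The tail probability $\mathbf{P}(W_n\notin U_k^{-1}U_k)\le n\lambda_\mu(U_k)$ is then obtained from the Naor--Peres Markov type inequality applied to the $1$-coboundary $\sigma(g)=\tau_g\phi-\phi$ built from the Dirichlet test function $\phi$ on $U_k$ --- no moment, support, or volume-growth input is needed. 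The entropy is decomposed over the shells $\Omega_k\setminus\Omega_{k-1}$ with $\Omega_k=\cup_{j\le k}U_j^{-1}U_j$, and the regularity of $\gamma$ makes the resulting series geometric, yielding $H_\mu(n)\lesssim e^{k_n}\asymp n^{\beta/(1-\beta)}$. In effect, the paper replaces your pair (near-diagonal lower bound, Carne--Varopoulos) with the pair (spectral profile, Markov type), which is exactly what circumvents the obstruction you identified but did not resolve.
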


\begin{remark}

The bound stated above is a special case of Theorem \ref{bound}.
A larger class of lower bound functions on the return probability
can be converted into entropy upper bounds. For example, suppose there
exists constants $C>0$, $\beta\in\left(0,\frac{1}{2}\right)$ and
$\kappa\in\mathbb{R}$ such that 
\[
\mu^{(2n)}(\id)\ge\exp\left(-Cn^{\beta}\log^{\kappa}(n+1)\right),
\]
then there exists a constant $C_{1}=C_{1}(\beta,\kappa,C)$ such that
\[
H_{\mu}(n)\le C_{1}n^{\frac{\beta}{1-\beta}}\log^{\frac{\kappa}{1-\beta}}(n+1).
\]
More details can be found in Section \ref{sec:entropy}.

\end{remark}

Based on the spectral profile of balls in the group $G$, we derive
an upper bound on the rate of escape $L_{\mu}(n)$. Given a symmetric
probability measure $\mu$ on $G$ and a function $f\in\ell^{2}(G)$,
consider the associated Dirichlet form 
\[
\mathcal{E}_{\mu}(f)=\frac{1}{2}\sum_{x,y\in G}(f(xy)-f(x))^{2}\mu(y),
\]
and define 
\begin{equation}
\lambda_{\mu}(\Omega)=\inf\{\mathcal{E}_{\mu}(f):\mbox{support}(f)\subset\Omega,\|f\|_{2}=1\}.\label{def-eig}
\end{equation}
In words, $\lambda_{\mu}(\Omega)$ is the lowest eigenvalue of the
operator of convolution by $\delta_{e}-\mu$ with Dirichlet boundary
condition in $\Omega$. This operator is associated with the discrete
time Markov process corresponding to the $\mu$-random walk killed
outside $\Omega$.

\begin{theorem}\label{escape}

Let $\mu$ be a symmetric probability measure on $G$, $\left(W_{n}\right)$
be a $\mu$-random walk on $G$. Suppose there exists constants $C>0$
and $\theta>0$ such that 
\[
\lambda_{\mu}(B(\id,r))\le Cr^{-\theta},
\]
then for any $\alpha\in\left(0,\theta\right)$, there exists a constant
$C_{1}=C_{1}(C,\theta,\alpha)$ such that 
\[
\mathbf{E}\max_{0\le k\le n}\left|W_{k}\right|^{\alpha}\le C_{1}n^{\alpha/\theta}.
\]

\end{theorem}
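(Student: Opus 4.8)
The plan is to reduce the maximal moment bound to a small-time tail estimate on the exit time of the walk from balls, and to control that tail through the ground state of the killed walk; the eigenvalue hypothesis enters precisely to make this exit time long.

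First I would pass from the maximal moment to an exit probability via the layer-cake formula. Writing $M_n=\max_{0\le k\le n}|W_k|$ and $\tau_r=\inf\{k\ge 0:|W_k|\ge r\}$, so that $\{M_n\ge r\}=\{\tau_r\le n\}$ and (recalling $W_0=\id$)
\[
\mathbf{E}\,M_n^{\alpha}=\alpha\int_0^{\infty}r^{\alpha-1}\,\mathbf{P}_{\id}(\tau_r\le n)\,dr,
\]
the theorem follows from the key estimate $\mathbf{P}_{\id}(\tau_{r}\le n)\le C'\,n\,\lambda_\mu(B(\id,r))\le C'C\,n\,r^{-\theta}$, used together with the trivial bound $\mathbf{P}_{\id}(\tau_r\le n)\le 1$. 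Indeed, splitting the integral at the crossing radius $R\asymp n^{1/\theta}$, the inner part contributes $\asymp R^{\alpha}\asymp n^{\alpha/\theta}$, while the outer part is $\alpha C'C\,n\int_R^{\infty}r^{\alpha-1-\theta}\,dr=\frac{\alpha C'C}{\theta-\alpha}\,n\,R^{\alpha-\theta}\asymp n^{\alpha/\theta}$. Here the hypothesis $\alpha<\theta$ is exactly what makes the outer integral converge and produces the stated dependence $C_1=C_1(C,\theta,\alpha)$.

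Next I would establish the key estimate through the spectral theory of the walk killed on exiting $\Omega:=B(\id,r)$. Let $P_\Omega=\mathbf 1_\Omega P\mathbf 1_\Omega$ be the Dirichlet restriction of the transition operator $Pf(x)=\sum_y f(xy)\mu(y)$. Since $\mu$ is symmetric, $P_\Omega$ is self-adjoint on $\ell^2(\Omega)$ and $\mathcal{E}_\mu(f)=\langle(I-P_\Omega)f,f\rangle$ for $f$ supported in $\Omega$, so by \eqref{def-eig} the top of the spectrum of $P_\Omega$ equals $1-\lambda$ with $\lambda=\lambda_\mu(\Omega)$. Because $P_\Omega$ has nonnegative entries, Perron--Frobenius supplies a ground state $\phi\ge 0$, $\|\phi\|_2=1$, with $P_\Omega\phi=(1-\lambda)\phi$ (passing to a lazy walk if needed so that $1-\lambda$ dominates the spectrum in modulus). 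The role of the ground state is the exact identity, valid for the walk started from the normalized law $\nu_\phi:=\phi/\|\phi\|_1$,
\[
\mathbf{P}_{\nu_\phi}\big(\tau_\Omega>n\big)=\big\langle \nu_\phi,\,P_\Omega^{\,n}\mathbf 1_\Omega\big\rangle=\frac{(1-\lambda)^n}{\|\phi\|_1}\,\langle\phi,\mathbf 1_\Omega\rangle=(1-\lambda)^n,
\]
using self-adjointness and $\langle\phi,\mathbf 1_\Omega\rangle=\|\phi\|_1$. Hence $\mathbf{P}_{\nu_\phi}(\tau_\Omega\le n)=1-(1-\lambda)^n\le n\lambda$, which is the desired bound, but for the initial distribution $\nu_\phi$ rather than for the point mass at the centre.

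The main obstacle is to upgrade this from the law $\nu_\phi$ to the deterministic start at the centre $\id$ without paying a volume factor (the naive transfer that keeps only the $x=\id$ term costs a factor $\|\phi\|_1/\phi(\id)$, which is far too large). The correct formulation is a uniform-in-time bound on the one-step exit hazard from the centre,
\[
1-\frac{\mathbf{P}_{\id}(\tau_\Omega>j+1)}{\mathbf{P}_{\id}(\tau_\Omega>j)}=1-\frac{(P_\Omega^{\,j+1}\mathbf 1_\Omega)(\id)}{(P_\Omega^{\,j}\mathbf 1_\Omega)(\id)}\le C'\lambda\qquad(0\le j<n),
\]
which telescopes to $\mathbf{P}_{\id}(\tau_\Omega>n)\ge(1-C'\lambda)^n\ge 1-C'n\lambda$. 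By Perron--Frobenius the displayed ratio tends to $1-\lambda$ as $j\to\infty$, and for small $j$ it is close to $1$ because $\id$ lies at distance $r$ from the killing boundary, so the walk has not yet reached it; the difficulty is to make this uniform over all $j$ and all $r$ with a single constant $C'$. Here I would exploit that the Cayley graph is vertex-transitive, so $\lambda_\mu(B(x,r))=\lambda_\mu(B(\id,r))$ for every $x$ and the walk may be freely re-centred and restarted, reducing the bound to a comparison of $\delta_{\id}$ with the ground state on balls centred at the walk's current position. Assembling this key estimate, with its universal constant, together with the layer-cake reduction of the first step yields $\mathbf{E}\,M_n^{\alpha}\le C_1 n^{\alpha/\theta}$.
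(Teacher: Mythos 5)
Your first step---the layer-cake identity plus the shell summation, split at $R\asymp n^{1/\theta}$, with $\alpha<\theta$ making the outer integral converge---is sound and matches the structure of the paper's proof of Theorem \ref{moment}. The gap is that the key estimate $\mathbf{P}_{\id}(\tau_r\le n)\le C'\,n\,\lambda_\mu(B(\id,r))$ is never actually proved. The ground-state identity only controls the exit time for the walk started from $\nu_\phi$; transferring to the point mass $\delta_{\id}$ is the entire content of the estimate, and your proposed route, the uniform hazard bound $1-(P_\Omega^{\,j+1}\mathbf 1_\Omega)(\id)/(P_\Omega^{\,j}\mathbf 1_\Omega)(\id)\le C'\lambda$, is essentially equivalent to the statement being proved (summing the hazards over $j\le n$ is the whole claim), while the only mechanism you offer for it is an appeal to vertex-transitivity, which does not by itself compare $\delta_{\id}$ with the ground state without a volume-type loss. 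Note also that your heuristic for small $j$ (``$\id$ lies at distance $r$ from the killing boundary, so the walk has not yet reached it'') tacitly assumes bounded steps, whereas the theorem assumes nothing about $\mu$ beyond symmetry: already $\mathbf{P}_{\id}(\tau_r\le 1)=\mu(\{g:|g|>r\})$, and controlling even this single term by $\lambda_\mu(B(\id,r))$ requires an argument. (It does hold after doubling the radius, since $\mathcal{E}_\mu(f)\ge\tfrac12\|f\|_2^2\,\mu(\{g:|g|>2r\})$ for $f$ supported in $B(\id,r)$, but that is a computation you have not made, and the uniformity over all $j$ remains entirely open.)

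The paper obtains exactly your key estimate, in the maximal form $\mathbf{P}(\exists k\le n:\ W_k\notin U^{-1}U)\le(32n+1)\lambda_\mu(U)$ with $U=B(\id,r)$ (so the exit is from the doubled ball, harmless under the polynomial hypothesis), by a different and non-spectral mechanism: take a near-minimizer $\phi$ for $\lambda_\mu(U)$, form the $1$-coboundary $\sigma(g)=\tau_g\phi-\phi$, and apply the Markov type $2$ inequality (\ref{eq:markov}) of Naor--Peres together with Doob's maximal inequality (Lemmas \ref{tail} and \ref{max-tail}). Since $\mathbf{E}\bigl[\|\sigma(W_1)\|_2^2\bigr]=2\mathcal{E}_\mu(\phi)$ while $\|\sigma(g)\|_2^2=2\|\phi\|_2^2$ whenever $g\notin U^{-1}U$, Chebyshev gives the tail bound directly for the walk started at $\id$, with no ground-state comparison needed. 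To salvage your spectral route you would have to supply a genuine proof of the uniform hazard bound; as written, the central estimate is asserted rather than established.
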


The stability of the asymptotic behavior of the function $r\to\lambda_{\mu}(B(\id,r))$
was investigated by Tessera in \cite{Tessera2013}. In particular,
in \cite{Tessera2013} it was proved that an upper bound of the form
$\lambda_{\mu}(B(\id,r))=O(g(Cr))$ when $r\to\infty$ for some monotone
function $g$ is stable under quasi-isometry, taking finite products,
quotients and finitely generated subgroups. From the stability results
we deduce the following corollary.

\begin{corollary}\label{polycyclic}

Let $G$ be a finitely generated group that is quasi-isometric to
a polycyclic group. Then for any symmetric probability measure $\mu$
on $G$ with finite second moment, there exists a constant $C>0$
such that
\[
L_{\mu}(n)\le\mathbf{E}\max_{0\le k\le n}\left|W_{k}\right|\le Cn^{\frac{1}{2}}.
\]

\end{corollary}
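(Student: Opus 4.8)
The plan is to reduce the escape estimate to the single spectral input
\[
\lambda_{\mu}(B(\id,r))\le Cr^{-2},
\]
and then close the argument with Theorem~\ref{escape}. The left inequality in the statement is immediate, since $L_{\mu}(n)=\mathbf{E}|W_{n}|\le\mathbf{E}\max_{0\le k\le n}|W_{k}|$, so everything rests on the right-hand bound. Granting the spectral estimate above, I would apply Theorem~\ref{escape} with $\theta=2$ and $\alpha=1\in(0,\theta)$; this yields $\mathbf{E}\max_{0\le k\le n}|W_{k}|\le C_{1}n^{1/2}$, which is exactly the claim. Thus the whole corollary is driven by producing the exponent $\theta=2$ in the spectral profile of balls.

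To obtain $\lambda_{\mu}(B(\id,r))\le Cr^{-2}$ I would proceed in two stages. First, the reference case: for a polycyclic group $P$ and the simple random walk (or any symmetric measure $\mu_{0}$ with finite generating support) one has $\lambda_{\mu_{0}}(B(\id,r))\simeq r^{-2}$, reflecting the diffusive nature of the walk; this is the model computation underlying Tessera's analysis in \cite{Tessera2013}, which I would cite rather than reprove. Second, since $G$ is quasi-isometric to $P$, I would invoke the stability result of \cite{Tessera2013}: the property that $\lambda_{\bullet}(B(\id,r))=O(r^{-2})$ holds for \emph{some} symmetric measure with finite generating support is a quasi-isometry invariant. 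This produces a symmetric measure $\mu_{1}$ on $G$ with finite generating support and $\lambda_{\mu_{1}}(B(\id,r))\le Cr^{-2}$.

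The remaining step, which is where the finite second moment hypothesis enters, is to pass from $\mu_{1}$ to the given $\mu$. I would use the standard comparison of Dirichlet forms: writing each $y$ in the support of $\mu$ as a product of $|y|$ generators from the support of $\mu_{1}$ and applying Cauchy--Schwarz along the resulting path gives $\mathcal{E}_{\mu}(f)\le C\big(\sum_{g}|g|^{2}\mu(g)\big)\,\mathcal{E}_{\mu_{1}}(f)$ for all $f$, with $C$ depending only on $\mu_{1}$. The point is that this inequality preserves the support of the test function, so evaluating it on a near-optimal function for $\lambda_{\mu_{1}}(B(\id,r))$ and using finiteness of the second moment (together with the bi-Lipschitz equivalence of word metrics) yields $\lambda_{\mu}(B(\id,r))\le C'r^{-2}$ for the given $\mu$, completing the reduction.

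The main obstacle is the reference case on polycyclic groups of \emph{exponential} volume growth, such as $\mathrm{Sol}$: there the mass of $B(\id,r)$ concentrates near the boundary, so naive tent functions do not achieve Rayleigh quotient $O(r^{-2})$, and a genuinely adapted test function is required. This is precisely the content supplied by Tessera's framework, so I would lean on \cite{Tessera2013} for it; by contrast, the quasi-isometry transfer and the second-moment comparison are routine once that estimate is in hand.
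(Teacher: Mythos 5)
Your proposal is correct and follows essentially the same route as the paper: the authors note that polycyclic groups belong to Tessera's quasi-isometry-stable class of geometrically elementary solvable groups, on which $\lambda_{\mu}(B(r))\simeq r^{-2}$ holds for finitely supported symmetric measures, extend this to finite-second-moment measures by the standard comparison of Dirichlet forms, and then invoke Theorem \ref{escape} (in its general form, Theorem \ref{moment}). The only cosmetic difference is that you phrase the quasi-isometry transfer as a separate stability step, whereas the paper absorbs it into the definition of the class $\mathcal{S}$.
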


Theorem \ref{escape} is a special case of Theorem \ref{moment},
and Corollary \ref{polycyclic} is a special case of Corollary \ref{diffusive}.
In Section \ref{sec:bubble} we illustrate the sharpness of Theorem
\ref{bound-1}.

\begin{proposition}\label{bubble-exponent}

For any $\beta\in\left[\frac{1}{3},\frac{1}{2}\right)$, there exists
a group $G$ and a symmetric probability measure $\mu$ of generating
finite support on $G$ such that 
\begin{align*}
\mu^{(2n)}(\id) & \simeq\exp\left(-n^{\beta}\log^{1-\beta}n\right),\\
H_{\mu}(n) & \simeq n^{\frac{\beta}{1-\beta}}\log n.
\end{align*}

\end{proposition}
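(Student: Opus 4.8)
The plan is to realize each exponent by a wreath product $G=\mathbb{Z}\wr B_\beta$ with $\mathbb{Z}$-valued lamps over a finitely generated base group $B_\beta$, and to take $\mu$ to be a lazy symmetric ``switch--walk--switch'' measure, which has finite generating support. Using $\mathbb{Z}$ rather than a finite lamp group is precisely what produces the logarithmic factors: resetting a $\mathbb{Z}$-lamp that has accumulated local time $\ell$ costs $\asymp \ell^{-1/2}$, i.e. an additive $\tfrac12\log\ell$ in the exponent, while a typical lamp contributes $\asymp\log\ell$ to the entropy. I would choose $B_\beta$ from a family of groups whose balls have two-sided controlled Dirichlet spectral profile and volume,
\[
\lambda_\mu(B(\id,r))\simeq r^{-\theta},\qquad |B(\id,r)|\simeq r^{D},
\]
with $D,\theta$ dialed so that $\beta=\tfrac{D}{D+\theta}$, equivalently $\tfrac{\beta}{1-\beta}=\tfrac{D}{\theta}\in[\tfrac12,1)$, so that the base walk has spectral dimension $d_s=2D/\theta\in[1,2)$ and is recurrent. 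For the endpoint $\beta=\tfrac13$ one may simply take $B_\beta=\mathbb{Z}$ (so $G=\mathbb{Z}\wr\mathbb{Z}$, with $D=1,\theta=2$). Since genuine polynomial-growth groups are diffusive ($\theta=2$) with integer degree, they realize only the discrete values $\beta=D/(D+2)$; to sweep the whole interval $(\tfrac13,\tfrac12)$ one must take $B_\beta$ with non-integer spectral dimension, built as diagonal products / ``bubble'' groups engineered so that the continuum of exponents is attained.

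For the return probability I would prove matching bounds by the confinement method. The lower bound is the explicit strategy: force the $B$-walk to remain in $B(\id,r)$ up to time $n$, which happens with probability at least $\exp(-Cn\lambda_\mu(B(\id,r)))=\exp(-Cnr^{-\theta})$ by the bottom Dirichlet eigenvalue, and then demand that every lamp return to $0$; with local time roughly equidistributed over the $\asymp r^{D}$ occupied sites this conditional cost is $\asymp\exp(-c\,r^{D}\log(n/r^{D}))$. Optimizing $nr^{-\theta}+r^{D}\log(n/r^{D})$ yields the balance $r^\ast\simeq(n/\log n)^{1/(D+\theta)}$ and value $\simeq n^{D/(D+\theta)}(\log n)^{\theta/(D+\theta)}=n^{\beta}(\log n)^{1-\beta}$. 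The matching upper bound on $\mu^{(2n)}(\id)$ is the hard analytic input: it must rule out any competing strategy, and I would obtain it from the spectral/isoperimetric profile of the wreath product via a Nash-type or Coulhon--Grigor'yan--Pittet functional-inequality comparison, driven by the controlled profile of $B_\beta$.

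For the entropy I would argue two-sidedly from typical behavior, with no large deviations. In $n$ steps the base walk, being recurrent with $d_s<2$, visits $R_B(n)\simeq n^{D/\theta}$ distinct sites, each with typical local time $\asymp n/R_B(n)$, so its $\mathbb{Z}$-lamp has typical magnitude $\asymp(n/R_B(n))^{1/2}$ and Shannon entropy $\asymp\log(n/R_B(n))\simeq(1-D/\theta)\log n$. Summing over the $\asymp R_B(n)$ occupied sites, and checking that the base position contributes only a lower-order term (controlled via Theorem \ref{escape}), yields
\[
H_\mu(n)\simeq R_B(n)\log(n/R_B(n))\simeq n^{D/\theta}\log n=n^{\frac{\beta}{1-\beta}}\log n .
\]
The upper bound comes from counting the configurations reachable with non-negligible probability, and the lower bound from the explicit entropy of the lamp configuration together with near-independence of the lamps across well-separated sites. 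The main obstacle throughout is the construction and two-sided control of the base family $B_\beta$: establishing $\lambda_\mu(B(\id,r))\simeq r^{-\theta}$ together with the volume estimate for a continuum of $\theta$, since both the return-probability upper bound and the range estimate rest on this geometric input.
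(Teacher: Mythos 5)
Your quantitative skeleton is right---the balance $nr^{-\theta}+r^{D}\log(n/r^{D})$, the value $n^{\beta}(\log n)^{1-\beta}$ at $\beta=D/(D+\theta)$, and the role of the $\mathbb{Z}$-lamps in generating the logarithmic corrections are exactly the mechanisms at work---but the construction you hang it on has a genuine gap: the family of finitely generated base groups $B_{\beta}$ you require does not exist for $\beta\in(\tfrac{1}{3},\tfrac{1}{2})$. A two-sided volume bound $|B(\id,r)|\simeq r^{D}$ forces (by Gromov) $B_{\beta}$ to be virtually nilpotent, hence $D$ an integer and $\lambda_{\mu}(B(\id,r))\simeq r^{-2}$, so $\theta$ is not a free dial; and for the lamps to reset (equivalently, for $\mathbb{Z}\wr B_{\beta}$ to have sublinear entropy at all) the base walk must be recurrent, which by Varopoulos forces $B_{\beta}$ to be virtually trivial, $\mathbb{Z}$ or $\mathbb{Z}^{2}$. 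The only admissible case is $D=1$, i.e.\ $\beta=\tfrac{1}{3}$ and $G=\mathbb{Z}\wr\mathbb{Z}$---the endpoint you already handle. No finitely generated group is simultaneously recurrent and of ``non-integer spectral dimension'' in the two-sided sense you need, so a standard wreath product $\mathbb{Z}\wr B_{\beta}$ cannot sweep the interval; taking $B_{\beta}$ to be a bubble group does not help, since the bubble groups are transient on their own Cayley graphs.

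The fix, which your parenthetical mention of bubble groups gestures at but does not implement, is to replace the standard wreath product by the \emph{permutational} wreath product $G_{\mathbf{a}}=\mathbb{Z}\wr_{\mathcal{M}_{\mathbf{a}}}\Gamma_{\mathbf{a}}$, placing the $\mathbb{Z}$-lamps on the Schreier graph $\mathcal{M}_{\mathbf{a}}$ rather than on the group. With scaling sequence $\alpha_{k}=2^{\theta k}$ the Schreier graph has ball volume $\simeq r^{1+1/\theta}$ with exponent ranging over $(1,2)$, and the induced (inverted-orbit) walk is recurrent because recurrence is now governed by the graph's effective resistance rather than by any group-growth constraint; this is what makes the continuum $\beta=\frac{\theta+1}{3\theta+1}\in(\tfrac{1}{3},\tfrac{1}{2})$ attainable. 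Once that substitution is made, your outline essentially matches the paper's proof: the two-sided return probability comes from the spectral profile estimates of \cite{perm} via Coulhon--Grigor'yan, the entropy lower bound is the Amir--Vir\'ag occupation-measure argument along the inverted orbit (Lemma \ref{bubble-entropy}), and the entropy upper bound is not an independent counting argument but an application of Theorem \ref{bound} itself---which is the point, since the proposition is there to certify that theorem's sharpness.
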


Theorem \ref{liouville}, \ref{bound-1} and \ref{escape} extend
to symmetric random walks on transitive graphs. In Section 6, we explain
how the results carry over. 

\subsubsection{Related works}

The idea of connecting slow decay of the return probability to the
Liouville property was first proposed by Gournay in \cite{Gournay2014}.
The \emph{Hilbert compression exponent }$\alpha_{2}^{\ast}(G)$ was
introduced in Guentner and Kaminker \cite{Guentner2004}, who defined
$\alpha_{2}^{\ast}(G)$ as the supremum over all $\alpha\ge0$ such
that there exists a $1$-Lipschitz map $f:G\to\mathcal{H}$ and a
constant $c>0$ such that $\left\Vert f(x)-f(y)\right\Vert _{\mathcal{H}}\ge cd_{G}(x,y)^{\alpha}$.
Analogously, one can consider the \emph{equivariant Hilbert compression
exponent $\alpha_{2}^{\#}(G)$, }which is defined in the same way
as $\alpha_{2}^{*}(G)$ with the additional requirement that $f$
is equivariant (see Section 3 for the definition). It is known that
for finitely generated amenable groups, $\alpha_{2}^{\ast}(G)=\alpha_{2}^{\#}(G)$,
see \cite{Cornulier2007}. One result from \cite{Gournay2014} is
the following. Suppose $\mu$ is a symmetric probability measure of
finite support on $G$ such that $\mu^{(2n)}(\id)\ge C_{1}\exp\left(-C_{2}n^{\gamma}\right)$
for some $\gamma\in(0,1)$, then \cite[Theorem 1.1]{Gournay2014}
\[
\alpha_{2}^{\#}(G)\ge\frac{1-\gamma}{1+\gamma}.
\]
Let $\beta^{\ast}(\mu)$ be the upper speed exponent, $\beta^{*}(\mu):=\limsup_{n\to\infty}\frac{\log L_{\mu}(n)}{\log n}$.
From Austin, Naor and Peres \cite[Proposition 1.1]{Austin2009}, 
\begin{equation}
\alpha_{2}^{\#}(G)\le\frac{1}{2\beta^{\ast}(\mu)}.\label{eq:anp}
\end{equation}
Therefore Gournay's result implies that when $\mu^{(2n)}(\id)\ge C_{1}\exp\left(-C_{2}n^{\gamma}\right)$,
we have 
\begin{equation}
\beta^{\ast}(\mu)\le\frac{1+\gamma}{2(1-\gamma)}.\label{eq:beta}
\end{equation}
By a result of Varopoulos, we have $\mu^{(2n)}(id)\lesssim\exp\left(-n^{\frac{1}{3}}\right)$
for simple random walks on groups of exponential volume growth. The
upper bound (\ref{eq:beta}) on $\beta^{\ast}(\mu)$ is not strong
enough to imply the Liouville property of $(G,\mu)$ when $\gamma\in[1/3,1/2)$. 

In the work of the second author with Saloff-Coste \cite{Saloff-Coste2014},
an upper bound similar to Theorem \ref{bound-1} was proved. More
precisely, suppose $\mu$ is a symmetric probability measure of finite
second moment on $G$ such that $\mu^{(2n)}(\id)\ge C_{1}\exp\left(-C_{2}n^{\gamma}\right)$
for some $\gamma\in(0,1/2)$, then by \cite[Theorem 1.7]{Saloff-Coste2014}
\[
H_{\mu}(n)\lesssim\left(n\log^{1+\epsilon}n\right)^{\frac{\gamma}{1-\gamma}}\ \mbox{for any }\epsilon>0.
\]
In particular, this bound implies the Liouville property of $(G,\mu)$
for $\gamma\in(0,1/2)$. Our results in this paper imply a sharper
upper bound $H_{\mu}(n)\lesssim n^{\frac{\gamma}{1-\gamma}}$ under
the same assumption, and are applicable to a larger class of decay
lower bounds. 

The following bound on the upper speed exponent is known through work
of Tessera \cite{Tessera2011}, Austin, Naor and Peres \cite{Austin2009}.
Suppose there exists constants $C>0$ and $\theta>0$ such that 
\[
\lambda_{\mu}(B(\id,r))\le Cr^{-\theta},
\]
then 
\begin{equation}
\beta^{*}(\mu):=\limsup_{n\to\infty}\frac{\log L_{\mu}(n)}{\log n}\le\frac{1}{\theta}.\label{eq:upper-speed}
\end{equation}
This relation is proved via the equivariant Hilbert compression exponent
$\alpha_{2}^{\#}(G)$. Indeed, by \cite[Theorem 10]{Tessera2011},
the assumption $\lambda_{\mu}(B(\id,r))\le Cr^{-\theta}$ implies
that 
\begin{equation}
\alpha_{2}^{\#}(G)\ge\frac{\theta}{2}.\label{eq:tessera}
\end{equation}
By \cite[Proposition 1.1]{Austin2009}, we have 
\[
\beta^{\ast}(\mu)\le\frac{1}{2\alpha_{2}^{\#}(G)}\le\frac{1}{\theta}.
\]
Theorem \ref{escape} provides a more precise upper bound on $L_{\mu}(n)$
than just the upper speed exponent.

\section{The spectral profile}

The spectral profile $\Lambda_{G,\mu}$ of a symmetric probability
measure $\mu$ on $G$ is defined as 
\[
\Lambda_{G,\mu}(v)=\Lambda_{\mu}(v)=\inf\{\lambda_{\mu}(\Omega):\Omega\subset G,\;|\Omega|\le v\},
\]
where $\lambda_{\mu}(\Omega)$ is the lowest Dirichlet eigenvalue
defined in (\ref{def-eig}). We now review work of Coulhon \cite{CNash}
which relates the behavior of $n\mapsto\mu^{(2n)}(\id)$ to the spectral
profile $v\mapsto\Lambda_{\mu}(v)$. See \cite{CNash} for references
to earlier related works, in particular, work of Grigor'yan in which
the spectral profile plays a key role. By \cite[Proposition II.1]{CNash},
we have 
\[
\mu^{(2n+2)}(\id)\le2\psi(2n),
\]
where $\psi:[0,+\infty)\to[1,+\infty)$ is defined implicitly by 
\begin{equation}
t=\int_{1}^{1/\psi(t)}\frac{ds}{2s\Lambda_{\mu}(4s)}.\label{eq:coulhon}
\end{equation}
From Coulhon's result one can deduce the following useful lemma, see
\cite[Lemma 2.5]{Saloff-Coste2014}. We include a proof here for the
reader's convenience. 

\begin{lemma}\label{Lambda-upper}

Assume that $\mu^{(2n)}(\id)\ge\exp(-\gamma(n))$ where $\gamma:[1,\infty)\to(0,\infty)$
is an increasing function and $\lim_{n\to\infty}\gamma(n)=\infty$.
Then for all $n\in\mathbb{N}$ we have 
\[
\Lambda_{\mu}\left(8e^{\gamma(n)}\right)\le\frac{\gamma(n)+\log8}{2n}.
\]

\end{lemma}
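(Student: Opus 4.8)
The plan is to read Coulhon's estimate $\mu^{(2n+2)}(\id)\le 2\psi(2n)$ \emph{backwards}: the hypothesized lower bound forces $\psi$ to be large, hence forces the upper endpoint $1/\psi$ of the integral in (\ref{eq:coulhon}) to be small, and this constraint pins down $\Lambda_{\mu}$ at a single scale via the monotonicity of $v\mapsto\Lambda_{\mu}(v)$. Note first that $\psi$ is decreasing (as $t$ grows the positive integrand in (\ref{eq:coulhon}) must be integrated over a longer interval, so $1/\psi(t)$ increases) and that $\Lambda_{\mu}$ is non-increasing (its defining infimum is over a larger family for larger $v$); these two facts are what let me turn the integral identity into a pointwise bound.

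Concretely, I would fix $n\ge 2$ and apply Coulhon's bound in the shifted form $\mu^{(2n)}(\id)\le 2\psi(2n-2)$. Combining this with the hypothesis $\mu^{(2n)}(\id)\ge e^{-\gamma(n)}$ gives $\psi(2n-2)\ge\tfrac12 e^{-\gamma(n)}$, equivalently $1/\psi(2n-2)\le 2e^{\gamma(n)}$. Inserting $t=2n-2$ into (\ref{eq:coulhon}) and pulling $\Lambda_{\mu}$ out of the integral at its right endpoint, using $\Lambda_{\mu}(4s)\ge\Lambda_{\mu}(4/\psi(2n-2))$ for $s\le 1/\psi(2n-2)$, yields
\[
2n-2=\int_{1}^{1/\psi(2n-2)}\frac{ds}{2s\Lambda_{\mu}(4s)}\le\frac{1}{\Lambda_{\mu}\!\left(4/\psi(2n-2)\right)}\int_{1}^{1/\psi(2n-2)}\frac{ds}{2s}=\frac{\log\left(1/\psi(2n-2)\right)}{2\,\Lambda_{\mu}\!\left(4/\psi(2n-2)\right)}.
\]
Rearranging, then using $4/\psi(2n-2)\le 8e^{\gamma(n)}$ and $\log(1/\psi(2n-2))\le\gamma(n)+\log2$ together with monotonicity of $\Lambda_{\mu}$, I get $\Lambda_{\mu}(8e^{\gamma(n)})\le\frac{\gamma(n)+\log2}{4(n-1)}$. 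This is actually stronger than the claimed $\frac{\gamma(n)+\log8}{2n}$ for every $n\ge2$, since $4(n-1)\ge 2n$ there and $\log2<\log8$.

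The one remaining point, which is exactly where the constant $\log 8$ earns its keep, is the base case $n=1$, where the estimate above degenerates. Here I would instead invoke the trivial bound $\Lambda_{\mu}(v)\le\lambda_{\mu}(\{\id\})=1-\mu(\id)\le1$, valid for all $v\ge1$ and in particular for $v=8e^{\gamma(1)}\ge8$, and observe that $\gamma(1)+\log8>\log8>2$ forces $\frac{\gamma(1)+\log8}{2}\ge1$, closing the case. The only genuine obstacle is bookkeeping: Coulhon's inequality ties $\mu^{(2n)}(\id)$ to $\psi$ evaluated at the shifted level $2n-2$, and because $\gamma$ is \emph{increasing} this shift cannot be absorbed in the convenient direction, so one must carry the honest estimate at level $n$ and then verify that the deliberate slack in the stated constants ($2n$ in place of $4(n-1)$, and $\log8$ in place of $\log2$) is precisely enough to cover both the index shift and the base case.
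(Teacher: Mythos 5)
Your proposal is correct and follows essentially the same route as the paper's proof: combine Coulhon's bound $\mu^{(2n+2)}(\id)\le 2\psi(2n)$ with the monotonicity of $\Lambda_{\mu}$ to pull the spectral profile out of the integral in (\ref{eq:coulhon}) at its right endpoint, then rearrange. Your index bookkeeping is in fact slightly more careful than the paper's (which derives $\Lambda_{\mu}(8e^{\gamma(n+1)})\le\frac{\gamma(n+1)+\log 8}{4n}$ and leaves the re-indexing and the $n=1$ case implicit), and your explicit treatment of the base case via $\Lambda_{\mu}(v)\le 1$ is a welcome addition.
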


\begin{proof}

Let $\psi$ be defined in terms of $\Lambda_{\mu}$ as in (\ref{eq:coulhon}).
By definition of $\psi$ and the fact that $\Lambda_{\mu}$ is a non-increasing
function, we have 
\[
t=\int_{1}^{1/\psi(t)}\frac{ds}{2s\Lambda_{\mu}(4s)}\le\frac{\log\big(4/\psi(t)\big)}{2\Lambda_{\mu}\big(4/\psi(t)\big)},
\]
which we rewrite as 
\[
\Lambda_{\mu}\big(4/\psi(t)\big)\le\frac{\log\big(4/\psi(t)\big)}{2t}.
\]
By \cite[Proposition II.1]{CNash} and the hypothesis, 
\[
e^{-\gamma(n+1)}\le\mu^{(2n+2)}(\id)\le2\psi(2n).
\]
Hence 
\[
\Lambda_{\mu}\left(8e^{\gamma(n+1)}\right)\le\Lambda_{\mu}\big(4/\psi(2n)\big)\le\frac{\log\big(4/\psi(2n)\big)}{4n}\le\frac{\gamma(n+1)+\log8}{4n}.
\]

\end{proof}

\section{Upper bounds on entropy\label{sec:entropy}}

In this section we prove Theorem \ref{liouville}, \ref{bound-1}
and Corollary \ref{stability}. We will use $1$-coboundaries constructed
from test functions provided by the spectral profile, together with
the Markov type inequality to bound the probability that the random
walk deviates from its typical behavior. Throughout this section,
let $\left(W_{n}\right)$ be a $\mu$-random walk on $G$, where $\mu$
is a symmetric probability measure of finite entropy $H_{\mu}(1)<\infty$. 

Recall that $f:G\to\mathcal{H}$, where $\mathcal{H}$ is a Hilbert
space, is called a $1$-cocycle if there exists a representation $\pi:G\to\mbox{Isom}(\mathcal{H})$
such that $f(gx)=\pi_{g}f(x)+f(g)$. The space of $1$-cocycles associated
with representation $\pi$ is denoted by $Z^{1}(G,\pi)$. The space
of $1$-coboundaries, $B^{1}(G,\pi)=\left\{ f:G\to\mathcal{H}:\ \exists v\in\mathcal{H},\ f(g)=\pi_{g}v-v\right\} $,
is a subspace of $Z^{1}(G,\pi)$. We say a map $f:G\to\mathcal{H}$
is equivariant if $f\in Z^{1}(G,\pi)$ for some representation $\pi:G\to\mbox{Isom}(\mathcal{H})$.

The following Markov type inequality is a special case of \cite[Theorem 2.1]{Naor2008},
see also \cite[Remark 2.6]{Naor2008}: for any $f\in Z^{1}(G,\pi)$,
we have for all $n\in\mathbb{N}$, 
\begin{equation}
\mathbf{E}\left[\|f(W_{n})\|_{\mathcal{H}}^{2}\right]\le n\mathbf{E}\left[\|f(W_{1})\|_{\mathcal{H}}^{2}\right].\label{eq:markov}
\end{equation}
Note that if we choose $f$ to be a 1-coboundary, $f(g)=\pi_{g}v-v$,
then $\|f(g)\|_{\mathcal{H}}\le2\left\Vert v\right\Vert _{\mathcal{H}}$
is bounded. In particular, if we apply (\ref{eq:markov}) to $1$-coboundaries,
quantities on both sides are finite. The following lemma holds for
all symmetric random walks on $G$. 

\begin{lemma}\label{tail}

Let $\mu$ be a symmetric probability measure on $G$, and $U$ be
a finite subset of $G$. Then for a $\mu$-random walk $\left(W_{n}\right)$
on $G$, 
\[
\mathbf{P}\left(W_{n}\notin U^{-1}U\right)\le n\lambda_{\mu}(U).
\]

\end{lemma}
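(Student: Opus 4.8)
The plan is to combine the Markov type inequality (\ref{eq:markov}) with a carefully chosen $1$-coboundary built from a near-optimal test function for $\lambda_{\mu}(U)$, so that the squared norm of the resulting cocycle simultaneously reproduces the Dirichlet form after one step and detects the event $\{W_{n}\notin U^{-1}U\}$.

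First I would fix a function $\phi\in\ell^{2}(G)$ with $\mathrm{supp}(\phi)\subset U$ and $\|\phi\|_{2}=1$. Since $U$ is finite, $\ell^{2}(U)$ is finite dimensional and the continuous map $\phi\mapsto\mathcal{E}_{\mu}(\phi)$ attains its minimum $\lambda_{\mu}(U)$ on the unit sphere, so I may assume $\mathcal{E}_{\mu}(\phi)=\lambda_{\mu}(U)$ (alternatively, work with an $\epsilon$-minimizer and let $\epsilon\to0$ at the end). The decisive modeling choice is the representation: I would take $\mathcal{H}=\ell^{2}(G)$ equipped with the \emph{right} regular representation $\rho$, given by $(\rho_{g}\xi)(x)=\xi(xg)$, which is unitary, and set $f(g)=\rho_{g}\phi-\phi$. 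A direct check gives the cocycle relation $f(gx)=\rho_{g}f(x)+f(g)$, so $f\in B^{1}(G,\rho)\subset Z^{1}(G,\rho)$, and since $\|f(g)\|_{2}\le2\|\phi\|_{2}=2$ both sides of (\ref{eq:markov}) are finite.

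The two computations are then short. Unwinding definitions, $\mathbf{E}\|f(W_{1})\|_{2}^{2}=\sum_{g}\mu(g)\sum_{x}(\phi(xg)-\phi(x))^{2}=2\mathcal{E}_{\mu}(\phi)=2\lambda_{\mu}(U)$, where the right regular representation is precisely what matches the right-multiplication convention in the definition of $\mathcal{E}_{\mu}$. For the lower bound on $\|f\|^{2}$, expand $\|f(g)\|_{2}^{2}=2-2\langle\rho_{g}\phi,\phi\rangle$ with $\langle\rho_{g}\phi,\phi\rangle=\sum_{x}\phi(xg)\phi(x)$: this inner product can be nonzero only if some $x$ satisfies both $x\in U$ and $xg\in U$, which forces $g=x^{-1}(xg)\in U^{-1}U$. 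Hence $\|f(g)\|_{2}^{2}=2$ whenever $g\notin U^{-1}U$, and since $\|f\|_{2}^{2}\ge0$ everywhere, $\mathbf{E}\|f(W_{n})\|_{2}^{2}\ge2\,\mathbf{P}(W_{n}\notin U^{-1}U)$. Feeding both estimates into (\ref{eq:markov}) gives $2\,\mathbf{P}(W_{n}\notin U^{-1}U)\le\mathbf{E}\|f(W_{n})\|_{2}^{2}\le n\,\mathbf{E}\|f(W_{1})\|_{2}^{2}=2n\lambda_{\mu}(U)$, and dividing by $2$ yields the claim.

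The one genuine subtlety, and the step I would watch most carefully, is the bookkeeping of left versus right multiplication. Using the left regular representation instead would produce a step energy built from $\phi(gx)-\phi(x)$, which does \emph{not} match $\mathcal{E}_{\mu}$, and a detection set $UU^{-1}$ rather than $U^{-1}U$; moreover the symmetry of $\mu$ does not obviously identify $\lambda_{\mu}(U)$ with $\lambda_{\mu}(U^{-1})$, since $\mathcal{E}_{\mu}$ is not invariant under $\phi(x)\mapsto\phi(x^{-1})$. Choosing $\rho$ aligns both the energy identity and the exit set with the conventions of the statement, so no extra symmetrization argument is required.
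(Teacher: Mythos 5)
Your proposal is correct and follows essentially the same route as the paper: the same right-translation coboundary $\sigma(g)=\tau_{g}\phi-\phi$ built from a (near-)minimizer for $\lambda_{\mu}(U)$, the identity $\mathbf{E}\|\sigma(W_{1})\|_{2}^{2}=2\mathcal{E}_{\mu}(\phi)$, the disjoint-support observation for $g\notin U^{-1}U$, and the Markov type inequality (\ref{eq:markov}). The only cosmetic differences are that you normalize $\|\phi\|_{2}=1$ and note the infimum is attained on the finite-dimensional sphere, whereas the paper works with an $\epsilon$-minimizer and lets $\epsilon\to0$.
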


\begin{proof}

By the definition of $\lambda_{\mu}(U)$, for any $\epsilon>0$, there
exists a function $\phi$ supported on $U$ such that 
\[
\frac{\mathcal{E}_{\mu}(\phi)}{\|\phi\|_{2}^{2}}\le\lambda_{\mu}\left(U\right)+\epsilon.
\]
Consider the $1$-coboundary 
\begin{align*}
\sigma:G & \to\ell^{2}(G)\\
\sigma(g) & =\tau_{g}\phi-\phi,
\end{align*}
where $\tau_{g}$ is the right translation $\tau_{g}\phi(x)=\phi(xg)$.
By the Markov type inequality in Hilbert space (\ref{eq:markov}),
we have 
\[
\mathbf{E}\left[\|\sigma(W_{n})\|_{2}^{2}\right]\le n\mathbf{E}\left[\|\sigma(W_{1})\|_{2}^{2}\right].
\]
Note that 
\[
\mathbf{E}\left[\|\sigma(W_{1})\|_{2}^{2}\right]=2\mathcal{E}_{\mu}\left(\phi\right).
\]
On the other hand, if $g\notin U^{-1}U$, then $\mbox{supp }\phi\cap\mbox{supp }\tau_{g}\phi=\emptyset$,
and it follows that 
\[
\|\sigma(g)\|_{2}^{2}=\|\phi\|_{2}^{2}+\|\tau_{g}\phi\|_{2}^{2}=2\|\phi\|_{2}^{2}.
\]
Therefore 
\[
2\|\phi\|_{2}^{2}\mathbf{P}\left(W_{n}\notin U^{-1}U\right)\le\mathbf{E}\left[\|\sigma(W_{n})\|_{2}^{2}\right]\le2n\mathcal{E}_{\mu}\left(\phi\right).
\]
From the choice of function $\phi$, we have 
\begin{equation}
\mathbf{P}\left(W_{n}\notin U^{-1}U\right)\le n\frac{\mathcal{E}_{\mu}\left(\phi\right)}{\|\phi\|_{2}^{2}}\le n\left(\lambda_{\mu}(U)+\epsilon\right).\label{eq:tail}
\end{equation}
Since $\epsilon>0$ is an arbitrary positive number, we obtain the
statement.

\end{proof}

To proceed, for each $k\in\mathbb{N}$, let $U_{k}$ be a set of size
$\left|U_{k}\right|=\left\lceil e^{e^{k}}\right\rceil $ such that
\[
\lambda_{\mu}\left(U_{k}\right)\le2\Lambda_{\mu}\left(e^{e^{k}}\right).
\]
By Lemma \ref{Lambda-upper}, the assumption $\mu^{(2n)}(\id)\ge\exp(-\gamma(n))$
implies 
\begin{equation}
\Lambda_{\mu}\left(e^{e^{k}}\right)\le\frac{e^{k}}{2\gamma^{-1}\left(e^{k}-\log8\right)}\le\frac{1}{2e^{k}\varphi(e^{k}/2)}\mbox{ for all }k\ge k_{0}=\log\left(\gamma(2)+\log64\right),\label{eq:L-psi}
\end{equation}
where $\varphi(x)=\frac{\gamma^{-1}(x)}{x^{2}}$. Note that if $x^{-\frac{1}{2}}\gamma(x)$
decreases to $0$ as $x\to\infty,$ then $\varphi(x)$ is increasing
and $\lim_{x\to\infty}\varphi(x)=\infty$.

\begin{proof}[Proof of Theorem \ref{liouville}]

For a given time $n$, define 
\[
\ell_{n}=\min\{k:\ e^{k}\varphi^{\frac{1}{2}}(e^{k}/2)\ge n\}.
\]
For example, when $\gamma(n)=Cn^{\beta}$ for some $\beta\in(0,1/2)$,
then $\varphi(x)=C^{-1/\beta}x^{1/\beta-2}$ and $\ell_{n}\sim\log\left(n^{\frac{\beta}{1-\beta}}\right)$. 

By Lemma \ref{tail}, we have 
\begin{equation}
\mathbf{P}\left(W_{n}\notin U_{\ell_{n}}^{-1}U_{\ell_{n}}\right)\le2n\Lambda_{\mu}\left(e^{e^{\ell_{n}}}\right)\le\frac{2n}{2e^{\ell_{n}}\varphi(e^{\ell_{n}}/2)}\le\frac{1}{\varphi^{\frac{1}{2}}\left(e^{\ell_{n}}\right)}.\label{eq:tail2}
\end{equation}
This implies that 
\begin{equation}
\lim_{n\to\infty}\mathbf{P}\left(W_{n}\notin U_{\ell_{n}}^{-1}U_{\ell_{n}}\right)=0.\label{eq:limit}
\end{equation}
Assume now that the asymptotic entropy $h_{\mu}>0$. Then by Shannon
theorem, see for example \cite[Theorem 14.10]{LPbook}, for any $\epsilon>0$,
there exists $n_{\epsilon}$ such that for all $n\ge n_{\epsilon},$
\[
\mathbf{P}\left(W_{n}\in U\right)\le\epsilon+|U|e^{-nh_{\mu}/2}.
\]
Note that since $e^{\ell_{n}-1}\varphi^{\frac{1}{2}}(e^{\ell_{n}-1}/2)<n$,
we have 
\[
\log\left|U_{\ell_{n}}^{-1}U_{\ell_{n}}\right|\le2\log\left|U_{\ell_{n}}\right|=2e^{\ell_{n}}<\frac{2en}{\varphi^{\frac{1}{2}}\left(e^{\ell_{n}-1}/2\right)}.
\]
 Therefore if $h_{\mu}>0$, we have 
\[
\lim_{n\to\infty}\mathbf{P}\left(W_{n}\in U_{\ell_{n}}^{-1}U_{\ell_{n}}\right)=0,
\]
 which contradicts (\ref{eq:limit}). We conclude that $h_{\mu}=0$. 

\end{proof}

We now derive Corollary \ref{stability} and \ref{first-m} from Theorem
\ref{liouville}.

\begin{proof}[Proof of Corollary \ref{stability}]

By work of Pittet and Saloff-Coste \cite[Theorem 1.4]{PSCstab}, if
$\mu_{i}$, $i=1,2$, are symmetric probability measures on $G$ with
generating support and finite second moment $\sum_{g\in G}\left|g\right|^{2}\mu_{i}(g)<\infty$,
then the functions $n\mapsto\mu_{i}^{(2n)}(\id)$ satisfy $\mu_{1}^{(2n)}(\id)\simeq\mu_{2}^{(2n)}(\id)$.
Further, the equivalence class of the decay function $\mu_{1}^{(2n)}(\id)$
is an invariant of quasi-isometry, see \cite{PSCstab}. Combine this
stability result with Theorem \ref{liouville}, we obtain the statement.

\end{proof}

\begin{proof}[Proof of Corollary \ref{first-m}]

From the proof of Theorem \ref{liouville}, it is sufficient to show
that under the assumptions of the statement, for any symmetric probability
measure $\eta$ on $G$ with finite first moment, we have 
\[
\Lambda_{\eta}(e^{x})\le\frac{1}{x\varphi(x)},\mbox{ where }\varphi(x)\to\infty\mbox{ as }x\to\infty.
\]
To obtain such an estimate, we use a truncation argument similar to
the proof of \cite[Theorem 2.13]{Saloff-Coste2014}. 

We may assume that $\mu$ is uniform on a symmetric finite generating
set $S$ of $G$. By Lemma \ref{Lambda-upper}, the assumption that
$\mu^{(2n)}(\id)\gtrsim\exp(-n^{1/3})$ implies that there is a constant
$C>0$ such that for all $r\ge1$, 
\[
\Lambda_{\mu}(e^{r})\le\frac{C}{r^{2}}.
\]
Recall the pseudo-Poincaré inequality (see for example \cite{PSCstab})
\begin{equation}
\left\Vert f-\tau_{g}f\right\Vert _{\ell^{2}(G)}^{2}\le2|S||g|^{2}\mathcal{E}_{\mu}(f),\label{eq:pp}
\end{equation}
where $\tau_{g}$ is the right translation $\tau_{g}f(x)=f(xg)$.
For any function $f$ with finite support, we have 
\begin{align*}
\mathcal{E}_{\eta}(f) & =\frac{1}{2}\sum_{|g|\le r}\left\Vert f-\tau_{g}f\right\Vert _{\ell^{2}(G)}^{2}\eta(g)+\frac{1}{2}\sum_{|g|>r}\left\Vert f-\tau_{g}f\right\Vert _{\ell^{2}(G)}^{2}\eta(g)\\
 & \le\frac{1}{2}\sum_{|g|\le r}\left\Vert f-\tau_{g}f\right\Vert _{\ell^{2}(G)}^{2}\eta(g)+\left\Vert f\right\Vert _{\ell^{2}(G)}^{2}\eta\left\{ g:|g|\ge r\right\} .
\end{align*}
By the pseudo-Poincaré inequality (\ref{eq:pp}), we have 
\[
\sum_{|g|\le r}\left\Vert f-\tau_{g}f\right\Vert _{\ell^{2}(G)}^{2}\eta(g)\le2|S|\mathcal{E}_{\mu}(f)\sum_{|g|\le r}|g|^{2}\eta(g).
\]
Therefore 
\[
\frac{\mathcal{E}_{\eta}(f)}{\left\Vert f\right\Vert _{\ell^{2}(G)}^{2}}\le\left|S\right|\left(\sum_{|g|\le r}|g|^{2}\eta(g)\right)\frac{\mathcal{E}_{\mu}(f)}{\left\Vert f\right\Vert _{\ell^{2}(G)}^{2}}+\eta\left(\left\{ g:|g|\ge r\right\} \right).
\]
Now we restrict to functions with $\left|\mbox{supp}f\right|\le e^{r}$,
then it follows that 
\begin{align*}
\Lambda_{\eta}(e^{r}) & \le\left|S\right|\left(\sum_{|g|\le r}|g|^{2}\eta(g)\right)\Lambda_{\mu}(e^{r})+\eta\left(\left\{ g:|g|\ge r\right\} \right)\\
 & \le C|S|r^{-2}\left(\sum_{|g|\le r}|g|^{2}\eta(g)\right)+\eta\left(\left\{ g:|g|\ge r\right\} \right).
\end{align*}
Since $\eta$ has finite first moment, $\sum_{g\in G}|g|\eta(g)<\infty$,
we have $\sum_{|g|\ge r}|g|\eta(g)\to0$ as $\mbox{ }r\to\infty$.
Therefore, $r\eta\left(\left\{ g:|g|\ge r\right\} \right)\to0$ as
$r\to\infty$, and 
\begin{align*}
\frac{1}{r}\sum_{g:|g|\le r}\left|g\right|^{2}\eta\left(g\right) & \le\frac{1}{r}\left(r^{1/2}\sum_{|g|\le r^{1/2}}|g|\eta(g)+r\sum_{|g|\ge r^{1/2}}|g|\eta(g)\right)\\
 & \le r^{-1/2}\sum_{g\in G}|g|\eta(g)+\sum_{|g|\ge r^{1/2}}|g|\eta(g),
\end{align*}
which implies 
\[
\frac{1}{r}\sum_{g:|g|\le r}\left|g\right|^{2}\eta\left(g\right)\to0\mbox{ as }r\to\infty.
\]
We conclude that $r\Lambda_{\eta}(e^{r})\to0$ as $r\to\infty$. 

\end{proof}

We now prove an explicit upper bound on entropy when the decay of
the return probability $\mu^{(2n)}(\id)$ is much slower than $\exp\left(-n^{\frac{1}{2}}\right).$

\begin{theorem}\label{bound}

Let $\mu$ be a symmetric probability measure of finite entropy on
$G$. Suppose $\mu^{(2n)}(\id)\ge\exp\left(-\gamma(n)\right)$ where
$\gamma:[1,\infty)\to\mathbb{R}_{+}$ is a function such that for
some $\beta\in\left(0,\frac{1}{2}\right)$, the function $n\to n^{\beta}/\gamma(n)$
is non-decreasing. Define 
\begin{equation}
\rho(n)=\inf\left\{ x:\ \gamma^{-1}(x/2)/x>n\right\} .\label{eq:rho}
\end{equation}
Then there is a constant $C=C(\beta)$ such that 
\[
H_{\mu}(n)\le C\left(\rho(n)+1\right).
\]

\end{theorem}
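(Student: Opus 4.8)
The plan is to bound the Shannon entropy $H_\mu(n)=H(W_n)$ of $p=\mu^{(n)}$ by comparing $p$ with a carefully chosen potential $\phi$, via the elementary Gibbs inequality: for any $\phi:G\to(-\infty,+\infty]$ with $Z:=\sum_x e^{-\phi(x)}<\infty$ one has $H(W_n)\le \mathbf{E}[\phi(W_n)]+\log Z$, which follows from $\sum_x p(x)\log\!\big(e^{-\phi(x)}/p(x)\big)\le\log\sum_x e^{-\phi(x)}$ by Jensen. It therefore suffices to produce a $\phi$ that is small where $W_n$ typically lives and grows just fast enough off that region to keep $Z$ bounded, and then to estimate $\mathbf{E}[\phi(W_n)]$ using the multiscale tail bounds already available.

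I would reuse the scales from the proof of Theorem \ref{liouville}: the sets $U_k$ with $|U_k|=\lceil e^{e^k}\rceil$ and $\lambda_\mu(U_k)\le 2\Lambda_\mu(e^{e^k})$. Writing $B_k=U_k^{-1}U_k$, Lemma \ref{tail} and (\ref{eq:L-psi}) give, for $k\ge k_0$, the tail bound $\mathbf{P}(W_n\notin B_k)\le 2n\Lambda_\mu(e^{e^k})\le n e^{k}/\gamma^{-1}(e^{k}-\log 8)$, while $\log|B_k|\le 2e^k+2$. Passing to the nested sets $A_i=\bigcup_{j\le L+i}B_j$ (so that $\mathbf{P}(W_n\in\bigcup_i A_i)=1$, since the tail probabilities tend to $0$), I would take $\phi\equiv\log|A_i|+2i$ on the shell $A_i\setminus A_{i-1}$ and $\phi\equiv+\infty$ off $\bigcup_i A_i$. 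This makes $Z\le\sum_{i\ge0}e^{-2i}=O(1)$ automatically, and yields $\mathbf{E}[\phi(W_n)]\le \log|A_0|+\sum_{i\ge1}\mathbf{P}(W_n\notin A_{i-1})\big(\log|A_i|+2i\big)$.

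The core of the argument is the choice of the cutoff scale $L$ and the two consequences of the hypotheses that make the series converge at the right rate. I would set $L=\lceil\log\rho(n)\rceil$, so $\rho(n)\le e^L<e\,\rho(n)$ and the main term obeys $\log|A_0|\le 2e^L+O(\log L)\le 3\,\rho(n)$. The definition (\ref{eq:rho}) of $\rho$ gives $\gamma^{-1}(\rho(n)/2)\ge n\,\rho(n)$, and the assumption that $n^\beta/\gamma(n)$ is non-decreasing gives the scaling $\gamma^{-1}(cy)\ge c^{1/\beta}\gamma^{-1}(y)$ for $c\ge1$. Applying the scaling with $c\approx 2$ upgrades the first estimate to $\gamma^{-1}(e^{L}-\log8)\ge 2^{1/\beta}n\,\rho(n)>n e^{L}$ (using $1/\beta>2$), so the spectral tail bound at the core scale is below $1$, hence non-vacuous; applying it with the per-step factor $c=e$ gives $\dfrac{ne^{k+1}}{\gamma^{-1}(e^{k+1}-\log8)}\le e^{\,1-1/\beta}\cdot\dfrac{ne^{k}}{\gamma^{-1}(e^{k}-\log8)}$, i.e.\ a \emph{geometric} decay of the tail bounds with ratio $e^{1-1/\beta}<1$.

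I expect the decisive step to be the summation $\sum_{i\ge1}\mathbf{P}(W_n\notin A_{i-1})\big(\log|A_i|+2i\big)$. Against each tail probability sits the factor $\log|A_i|\asymp e^{L+i}$, which grows like $e^i$; multiplying this growth by the decay $e^{(i-1)(1-1/\beta)}$ produces a term with exponent $i(2-1/\beta)$, which is summable \emph{precisely because $\beta<1/2$}. This is exactly where the hypothesis $\beta\in(0,1/2)$ enters essentially, and why the method must fail for $\beta\ge 1/2$ (consistent with the Kotowski--Vir\'ag example). Performing the geometric sum bounds the series by $C_\beta\,e^{L}\le C_\beta\,e\,\rho(n)$, so that $H(W_n)\le\mathbf{E}[\phi(W_n)]+\log Z\le C(\beta)\big(\rho(n)+1\big)$. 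The remaining finitely-flavored nuisance is the range of small $n$ for which $L<k_0$ and (\ref{eq:L-psi}) does not apply; these I would dispose of by the crude bound $H_\mu(n)\le nH_\mu(1)$, absorbing them into the additive constant.
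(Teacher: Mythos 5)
Your proof is correct, and it shares the paper's entire multiscale skeleton: the same sets $U_k$ of size $\lceil e^{e^k}\rceil$ with $\lambda_\mu(U_k)\le 2\Lambda_\mu(e^{e^k})$, the same tail bound $\mathbf{P}(W_n\notin U_k^{-1}U_k)\lesssim n e^k/\gamma^{-1}(e^k/2)$ from Lemmas \ref{tail} and \ref{Lambda-upper}, the same critical scale $L=\lceil\log\rho(n)\rceil$ (the paper's $k_n$), and the same geometric summation with ratio $e^{2-1/\beta}<1$ obtained from $\gamma^{-1}(t)/\gamma^{-1}(s)\ge (t/s)^{1/\beta}$ --- so your diagnosis of where $\beta<1/2$ enters is exactly the paper's. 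The genuine difference is the device that converts the tail bounds into an entropy bound. The paper conditions on the partition into shells $\Omega_k\setminus\Omega_{k-1}$, bounds each conditional entropy by $\log|\Omega_k\setminus\Omega_{k-1}|$ by concavity, and then must separately show that the entropy $\Delta_n$ of the partition itself is $O(1)$, which costs a second geometric sum via $-a\log a\le 2e^{-1}\sqrt{a}$ (again using $\beta<1/2$). Your Gibbs inequality $H(W_n)\le\mathbf{E}[\phi(W_n)]+\log Z$ with $\phi=\log|A_i|+2i$ on the $i$-th shell does both jobs at once: the $+2i$ penalty keeps $Z=O(1)$ and silently absorbs the partition entropy, so no analogue of $\Delta_n$ appears. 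The two routes are equivalent in strength; yours is slightly cleaner at the final step. Two harmless slips: since $2e^{L}\le 2e\,\rho(n)$, the main term is bounded by roughly $6\rho(n)$ rather than the $3\rho(n)$ you state (irrelevant for a bound of the form $C(\beta)(\rho(n)+1)$); and the constant you pick up from the finitely many $n$ with $L<k_0$ via $H_\mu(n)\le nH_\mu(1)$ depends on $\gamma$ and $H_\mu(1)$ as well as on $\beta$ --- though the paper's own proof carries the same implicit caveat, as (\ref{eq:L-psi}) is only stated for $k\ge k_0$.
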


\begin{proof}

The proof is analogous to the proof of the \textquotedbl{}fundamental
inequality\textquotedbl{}, see for example \cite[Proposition 3.4]{Blach`ere2008}.

Let 
\[
k_{n}=\min\left\{ k:\ \frac{e^{k}}{\gamma^{-1}\left(e^{k}/2\right)}<\frac{1}{n}\right\} =\left\lceil \log\rho(n)\right\rceil .
\]
As before, let $\left(U_{k}\right)$ be a sequence of finite subsets
in $G$ such that $|U_{k}|=e^{e^{k}}$ and $\lambda_{\mu}(U_{k})\le2\Lambda_{\mu}\left(e^{e^{k}}\right)$.
Define $\Omega_{k}=\cup_{j=1}^{k}U_{j}^{-1}U_{j}$, $\Omega_{0}=\emptyset$.
Note that from definition of the set $\Omega_{k}$, we have 
\[
\left|\Omega_{k}\right|\le\sum_{j=1}^{k}\left|U_{j}\right|^{2}\le\frac{1}{1-e^{-2(e-1)}}e^{2e^{k}}.
\]
By (\ref{eq:tail}) and (\ref{eq:L-psi}), we have 
\begin{equation}
\mathbf{P}(W_{n}\notin\Omega_{k})\le\mathbf{P}(W_{n}\notin U_{k}^{-1}U_{k})\le\frac{ne^{k}}{4\gamma^{-1}\left(e^{k}/2\right)}.\label{eq:out-1}
\end{equation}
The entropy of $W_{n}$ can be decomposed as 
\begin{align*}
H_{\mu}(n) & =\mathbf{P}(W_{n}\in\Omega_{k_{n}})H\left(W_{n}|W_{n}\in\Omega_{k_{n}}\right)\\
 & +\sum_{k=k_{n}+1}^{\infty}\mathbf{P}\left(W_{n}\in\Omega_{k}\setminus\Omega_{k-1}\right)H\left(W_{n}|W_{n}\in\Omega_{k}\setminus\Omega_{k-1}\right)+\Delta_{n},
\end{align*}
where 
\[
\Delta_{n}=-\mathbf{P}(W_{n}\in\Omega_{k_{n}})\log\left(\mathbf{P}(W_{n}\in\Omega_{k_{n}})\right)-\sum_{k=k_{n}+1}^{\infty}\mathbf{P}(W_{n}\in\Omega_{k}\setminus\Omega_{k-1})\log\left(\mathbf{P}(W_{n}\in\Omega_{k}\setminus\Omega_{k-1})\right).
\]
By convexity of entropy, we have 
\begin{align*}
H_{\mu}(n) & -\Delta_{n}\le\log\left|\Omega_{k_{n}}\right|+\sum_{k\ge k_{n}+1}\log\left|\Omega_{k}\setminus\Omega_{k-1}\right|\mathbf{P}(W_{n}\in\Omega_{k}\setminus\Omega_{k-1})\\
 & \le c_{0}+2e^{k_{n}}+\sum_{k\ge k_{n}}2e^{k+1}\mathbf{P}(W_{n}\notin\Omega_{k}),
\end{align*}
where $c_{0}=\log\frac{1}{1-e^{-2(e-1)}}$. By (\ref{eq:out-1}),
\begin{equation}
\sum_{k\ge k_{n}}e^{k}\mathbf{P}(W_{n}\notin\Omega_{k})\le\sum_{k\ge k_{n}}e^{k}\frac{ne^{k}}{4\gamma^{-1}\left(e^{k}/2\right)}.\label{eq:out}
\end{equation}
From the assumption $n^{\beta}/\gamma(n)$ is non-decreasing and $\beta<\frac{1}{2}$,
we have that $\gamma^{-1}(t)/\gamma^{-1}(s)\ge\left(t/s\right)^{1/\beta}$.
Thus for all $k>k_{n}$, 
\[
\frac{e^{2k}}{\gamma^{-1}(e^{k}/2)}\le\frac{e^{2k_{n}}}{\gamma^{-1}(e^{k_{n}}/2)}\frac{e^{2k-2k_{n}}}{\left(e^{k}/e^{k_{n}}\right)^{1/\beta}}=\frac{e^{2k_{n}}}{\gamma^{-1}(e^{k_{n}}/2)}e^{-\left(1/\beta-2\right)(k-k_{n})}.
\]
Plug it into (\ref{eq:out}), we have 
\begin{align*}
\sum_{k\ge k_{n}}e^{k}\mathbf{P}(W_{n}\notin\Omega_{k}) & \le\frac{ne^{2k_{n}}}{\gamma^{-1}(e^{k_{n}}/2)}\sum_{k\ge k_{n}}e^{-(1/\beta-2)(k-k_{n})}\\
 & \le C_{\beta}\frac{ne^{2k_{n}}}{\gamma^{-1}(e^{k_{n}}/2)}\le C_{\beta}e^{k_{n}},
\end{align*}
where $C_{\beta}=\left(1-e^{-(1/\beta-2)}\right)^{-1}$. The last
step used the definition of $k_{n}$. By the definition of $\rho(n)$,
we have $\rho(n)\le e^{k_{n}}\le e\rho(n)$, we have that 
\begin{equation}
H_{\mu}(n)-\Delta_{n}\le c_{0}+2\left(1+C_{\beta}\right)e^{k_{n}}\le c_{0}+2e\left(1+C_{\beta}\right)\rho(n).\label{eq:H-rho}
\end{equation}
Finally, we show that $\Delta_{n}$ is bounded by a constant. By the
inequality $-a\log a\le2e^{-1}\sqrt{a}$, we have 
\[
\Delta_{n}\le2e^{-1}+2e^{-1}\sum_{k=k_{n}+1}^{\infty}\mathbf{P}(W_{n}\in\Omega_{k}\setminus\Omega_{k-1})^{\frac{1}{2}}.
\]
By (\ref{eq:out-1}) again, we have
\begin{align*}
\sum_{k=k_{n}+1}^{\infty}\mathbf{P}(W_{n} & \in\Omega_{k}\setminus\Omega_{k-1})^{\frac{1}{2}}\le\sum_{k=k_{n}+1}^{\infty}\left(\frac{ne^{k}}{4\gamma^{-1}\left(e^{k}/2\right)}\right)^{\frac{1}{2}}\\
 & \le\left(\frac{ne^{k_{n}}}{4\gamma^{-1}\left(e^{k_{n}}/2\right)}\right)^{\frac{1}{2}}\sum_{k=k_{n}+1}^{\infty}e^{-1/2\left(1/\beta-1\right)(k-k_{n})}\le2C_{\beta}.
\end{align*}
We conclude that $\Delta_{n}\le2e^{-1}(1+2C_{\beta}).$ The statement
follows from (\ref{eq:H-rho}).

\end{proof}

\begin{remark}\label{profile}

From the proof it is clear that Theorem \ref{bound} can be rephrased
in terms of the spectral profile as follows. Let $\mu$ be a symmetric
probability measure of finite entropy on $G$. Suppose there exists
$\alpha>1$ such that $\frac{1}{x^{\alpha}\Lambda_{\mu}(e^{x})}$
is non-decreasing. Define 
\[
\tilde{\rho}(n)=\inf\left\{ x:\ \Lambda_{\mu}(e^{x})\le\frac{1}{n}\right\} ,
\]
then there is a constant $C=C(\alpha)$ such that 
\[
H_{\mu}(n)\le C(\tilde{\varrho}(n)+1).
\]

\end{remark}

\begin{remark}

Suppose $\gamma(n)$ is a regularly varying function $\gamma(n)=n^{\beta}\ell(n)$
where $\beta\in\left(0,1/2\right)$, and $\ell$ is a slowly varying
function satisfying $\ell(n^{b})\simeq\ell(n)$ for all $b>0$. Then
by standard asymptotic inversion, see \cite[Proposition 1.5.15]{BGT},
we have that the function $\rho$ defined in (\ref{eq:rho}) satisfies
\[
\rho(n)\simeq n^{\frac{\beta}{1-\beta}}\ell^{\frac{1}{1-\beta}}(n).
\]
In particular, Theorem \ref{bound-1} in the Introduction follows
from Theorem \ref{bound}.

\end{remark}

\begin{example}

Consider simple random walk on the wreath product $\mathbb{Z}\wr\mathbb{Z}$.
By Pittet and Saloff-Coste \cite{Pittet2002}, 
\[
\mu^{(2n)}(e)\simeq\exp\left(-n^{\frac{1}{3}}\log^{\frac{2}{3}}n\right).
\]
From Erschler \cite{Erschlerdrift}, we have $H_{\mu}(n)\simeq n^{\frac{1}{2}}\log n$.
Theorem \ref{bound} implies that $H_{\mu}(n)\le n^{\frac{1}{2}}\log n$,
which is a sharp upper bound. We will see in Section \ref{sec:bubble}
a family of groups where Theorem \ref{bound} provides sharp upper
bounds on entropy. 

\end{example}

\section{Upper bounds on rate of escape\label{sec:escape}}

In this section, we show that if there is some additional information
on the spectral profile of balls, then the Markov type inequality
provides an upper bound on the rate of escape. 

We first show that Lemma \ref{tail} can be strengthened to provide
the following bound. 

\begin{lemma}\label{max-tail}

Let $\mu$ be a symmetric probability measure on $G$, and $U$ be
a finite subset of $G$. Then for a $\mu$-random walk $\left(W_{n}\right)$
on $G$, 
\[
\mathbf{P}\left(\exists k:\ 0\le k\le n,\ W_{k}\notin U^{-1}U\right)\le(32n+1)\lambda_{\mu}(U).
\]

\end{lemma}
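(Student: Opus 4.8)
The plan is to reuse the $1$-coboundary machinery from the proof of Lemma~\ref{tail} and upgrade the one-time estimate into a maximal estimate by a reflection argument at the first exit time. Fix $\epsilon>0$ and a function $\phi$ supported on $U$ with $\mathcal{E}_\mu(\phi)/\|\phi\|_2^2\le\lambda_\mu(U)+\epsilon$, and consider the coboundary $\sigma(g)=\tau_g\phi-\phi$. As in Lemma~\ref{tail}, if $g\notin U^{-1}U$ then $\operatorname{supp}\phi$ and $\operatorname{supp}\tau_g\phi$ are disjoint, so $\|\sigma(g)\|_2^2=2\|\phi\|_2^2$. The additional structure I want to exploit is that $\sigma$ is a genuine cocycle for the isometric right-translation representation: since $\tau_g\tau_h=\tau_{gh}$, one has $\sigma(gh)=\tau_g\sigma(h)+\sigma(g)$. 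Consequently, writing $W_{k+j}=W_k\cdot(X_{k+1}\cdots X_{k+j})$ and using that $\tau_{W_k}$ is an isometry,
\[
\|\sigma(W_{k+j})-\sigma(W_k)\|_2=\|\sigma(X_{k+1}\cdots X_{k+j})\|_2,
\]
and the right-hand side, conditionally on $\mathcal{F}_k$, is distributed as $\|\sigma(W_j)\|_2$ and independent of $\mathcal{F}_k$. This "stationary increments" property is exactly what lets the Markov type inequality~\eqref{eq:markov} be applied over an arbitrary sub-interval.

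Now let $T=\inf\{k\ge 0:\ W_k\notin U^{-1}U\}$ be the first exit time, a stopping time, and set $p=\mathbf{P}(T\le n)$; the event in the statement is $\{T\le n\}$. Run the walk up to time $2n$. On $\{T\le n\}$ we have $\|\sigma(W_T)\|_2=\sqrt{2}\,\|\phi\|_2$, so by the triangle inequality
\[
\sqrt{2}\,\|\phi\|_2=\|\sigma(W_T)\|_2\le \|\sigma(W_{2n})\|_2+\|\sigma(W_{2n})-\sigma(W_T)\|_2,
\]
whence at least one of the two summands is at least $\|\phi\|_2/\sqrt{2}$. Therefore
\[
p\le \mathbf{P}\!\left(\|\sigma(W_{2n})\|_2^2\ge \tfrac12\|\phi\|_2^2\right)+\mathbf{P}\!\left(\{T\le n\}\cap\{\|\sigma(W_{2n})-\sigma(W_T)\|_2^2\ge \tfrac12\|\phi\|_2^2\}\right).
\]

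The first term is controlled directly by Markov's inequality and \eqref{eq:markov}: since $\mathbf{E}\|\sigma(W_{2n})\|_2^2\le 2n\cdot 2\mathcal{E}_\mu(\phi)$, it is at most $8n\,\mathcal{E}_\mu(\phi)/\|\phi\|_2^2$. For the second term, decompose over $\{T=k\}$ with $k\le n$; using the strong Markov property together with the isometry identity above, on $\{T=k\}$ the increment from time $k$ to $2n$ satisfies $\mathbf{E}\!\left[\|\sigma(W_{2n})-\sigma(W_k)\|_2^2\mid\mathcal{F}_k\right]=\mathbf{E}\|\sigma(W_{2n-k})\|_2^2\le 2(2n-k)\mathcal{E}_\mu(\phi)\le 4n\,\mathcal{E}_\mu(\phi)$. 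Summing against $\mathbf{P}(T=k)$ and applying Markov's inequality bounds the second term by $8n\,\mathcal{E}_\mu(\phi)\,p/\|\phi\|_2^2$. Writing $q=\mathcal{E}_\mu(\phi)/\|\phi\|_2^2\le\lambda_\mu(U)+\epsilon$ and combining the two estimates yields the self-referential inequality $p\le 8nq\,(1+p)$. Since trivially $p\le 1$, this gives $p\le 16nq$, and letting $\epsilon\downarrow 0$ produces $p\le 16n\,\lambda_\mu(U)$, comfortably inside the claimed $(32n+1)\lambda_\mu(U)$.

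The step I expect to require the most care is the strong Markov / stationary-increment manipulation at $T$: one must verify that $\{T=k\}\in\mathcal{F}_k$, that $X_{k+1},X_{k+2},\dots$ are independent of $\mathcal{F}_k$, and that the isometry $\tau_{W_k}$ allows one to replace $\|\sigma(W_{2n})-\sigma(W_k)\|_2$ by an independent copy of $\|\sigma(W_{2n-k})\|_2$ in conditional expectation. The only other points worth recording cleanly are that running to time $2n$ (rather than $n$) ensures $2n-T\ge n$ on $\{T\le n\}$, so the post-exit interval is long enough to feed into \eqref{eq:markov}, and that the recursion is closed off simply by $p\le 1$, with no smallness hypothesis on $n\lambda_\mu(U)$ (if $(32n+1)\lambda_\mu(U)\ge 1$ the statement is vacuous).
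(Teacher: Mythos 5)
Your proof is correct, and it takes a genuinely different route from the paper's. The paper goes back inside the proof of the Markov type inequality: it extracts the Naor--Peres decomposition $2f(W_n)=M_n-\pi(W_n)N_n+\pi(W_n)v$ into a forward martingale and a backward martingale, applies Doob's $L^2$-maximal inequality to each, and thereby upgrades \eqref{eq:markov} to a maximal inequality $\mathbf{E}\bigl[\max_{0\le k\le n}\|f(W_k)\|_{\mathcal{H}}^2\bigr]\le(32n+1)\,\mathbf{E}\bigl[\|f(W_1)\|_{\mathcal{H}}^2\bigr]$ valid for every $1$-cocycle, which it then applies to the coboundary $\sigma$ exactly as in Lemma \ref{tail}. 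You instead treat \eqref{eq:markov} as a black box at the deterministic times $2n$ and $2n-k$ and convert it into an exit-probability bound by a first-exit/reflection argument: the cocycle identity $\sigma(gh)=\tau_g\sigma(h)+\sigma(g)$ together with the isometry of $\tau_{W_k}$ gives stationarity and independence of increments, the triangle inequality at the exit time $T$ splits $\{T\le n\}$ into two events each controlled by Chebyshev, and the self-referential bound $p\le 8nq(1+p)$ is closed off with $p\le 1$. I checked the steps that you flagged as delicate --- $\{T=k\}\in\mathcal{F}_k$, the independence of $\|\sigma(X_{k+1}\cdots X_{2n})\|_2$ from $\mathcal{F}_k$, the identity $\mathbf{E}\|\sigma(W_1)\|_2^2=2\mathcal{E}_\mu(\phi)$, and the arithmetic --- and they all go through; you even land at the slightly better constant $16n$ in place of $32n+1$, which is more than enough for the only downstream use of this lemma (Theorem \ref{moment} needs only the exit-probability form). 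What your route gives up is the genuine $L^2$-maximal inequality for cocycles that the paper establishes along the way, which is a strictly stronger statement than the exit bound (though the paper does not use it elsewhere); what it buys is an argument that never opens up the proof of \eqref{eq:markov} and avoids Doob's inequality and backward martingales entirely.
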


\begin{proof}

Together with the classical Doob's $L^{2}$-maximal inequality for
submartingles, the proof of the Markov type inequality (\ref{eq:markov})
in \cite[Theorem 2.1]{Naor2008} yields the maximal inequality: for
any $f\in Z^{1}(G,\pi)$ for some representation $\pi:G\to\mbox{Isom}(\mathcal{H})$,
\begin{equation}
\mathbf{E}\left[\max_{0\le k\le n}\|f(W_{k})\|_{\mathcal{H}}^{2}\right]\le(32n+1)\mathbf{E}\left[\|f(W_{1})\|_{\mathcal{H}}^{2}\right].\label{eq: max}
\end{equation}
To see this, let $v=\mathbf{E}[f(W_{1})]$, $X_{1},X_{2},\ldots$
be the random walk increments, then by \cite[Equation (20)]{Naor2008},
\[
2f(W_{n})=M_{n}-\pi(W_{n})N_{n}+\pi(W_{n})v,
\]
where $\left(M_{k}\right)_{k=0}^{n}$ is a martingle with respect
to the filtration induced by $\left(X_{k}\right){}_{k=0}^{n}$, and
$\left(N_{k}\right){}_{k=0}^{n}$ is a martingale with respect to
$\left(X_{n-k}\right)_{k=0}^{n}$, and $M_{k}$ has the same law as
$N_{k}$. Therefore

\begin{align*}
4\mathbf{E}\left[\max_{0\le k\le n}\|f(W_{k})\|_{\mathcal{H}}^{2}\right] & \le3\left(\mathbf{E}\left[\max_{0\le k\le n}\|M_{k}\|_{\mathcal{H}}^{2}\right]+\mathbf{E}\left[\max_{0\le k\le n}\|N_{k}\|_{\mathcal{H}}^{2}\right]+\left\Vert v\right\Vert _{\mathcal{H}}^{2}\right)\\
 & \le24\mathbf{E}\left[\|M_{n}\|_{\mathcal{H}}^{2}\right]+3\left\Vert v\right\Vert _{\mathcal{H}}^{2}.
\end{align*}
In the last step we used Doob's $L^{2}$-maximal inequality for submartingales
that $\mathbf{E}\left[\max_{0\le k\le n}\|M_{k}\|_{\mathcal{H}}^{2}\right]\le4\mathbf{E}\left[\|M_{n}\|_{\mathcal{H}}^{2}\right]$.
By \cite[Equation (22)]{Naor2008}, we have
\[
\mathbf{E}\left[\|M_{n}\|_{\mathcal{H}}^{2}\right]\le4n\mathbf{E}\left[\|f(W_{1})\|_{\mathcal{H}}^{2}\right].
\]
The inequality (\ref{eq: max}) follows. 

Let the test function $\phi$ and the associated $1$-coboundary $\sigma$
be chosen as in the proof of Lemma \ref{tail}. Recall that if $g\notin U^{-1}U$,
then $\mbox{supp }\phi\cap\mbox{supp }\tau_{g}\phi=\emptyset$, and
it follows that 
\[
\|\sigma(g)\|_{2}^{2}=\|\phi\|_{2}^{2}+\|\tau_{g}\phi\|_{2}^{2}=2\|\phi\|_{2}^{2}.
\]
Therefore 
\begin{align*}
2\|\phi\|_{2}^{2}\mathbf{P}\left(\exists k:\ 0\le k\le n,\ W_{k}\notin U^{-1}U\right) & \le\mathbf{E}\left[\max_{0\le k\le n}\|\sigma(W_{k})\|_{2}^{2}\right]\\
 & \le2(32n+1)\mathcal{E}_{\mu}\left(\phi\right).
\end{align*}
The statement of the lemma follows from the choice of $\phi$.

\end{proof}

Let $\mu$ be a symmetric probability measure on $G$. Write $B(r)=B(\id,r)$.
Consider the spectral profile of balls 
\[
\lambda_{\mu}\left(B(r)\right)=\inf\{\mathcal{E}_{\mu}(f):\ \mbox{support}(f)\subset B(e,r),\|f\|_{2}=1\}.
\]

\begin{theorem}\label{moment}

Let $\mu$ be a symmetric probability measure on $G$. Suppose there
exists a function $f$ and constant $\theta>0$ such that 
\[
\lambda_{\mu}(B(r))\le f(r)\mbox{ and }\frac{f(r)}{f(s)}\le C_{0}\left(\frac{r}{s}\right)^{-\theta}\ \mbox{for all }r\ge s>0.
\]
Then for any $\alpha\in(0,\theta)$, there exists a constant $C=C(\theta,\alpha,C_{0})>0$
such that the $\mu$-random walk on $G$ satisfies 
\[
\mathbf{E}\left[\max_{0\le k\le n}\left|W_{k}\right|^{\alpha}\right]\le C\varrho(n)^{\alpha},
\]
where 
\[
\varrho(n):=\inf\left\{ r>0:\ f(r)<\frac{1}{n}\right\} .
\]

\end{theorem}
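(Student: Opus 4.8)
The plan is to combine the maximal tail estimate of Lemma~\ref{max-tail} with the integration-by-parts (layer-cake) formula for the $\alpha$-th moment of the running maximum. Write $M_n=\max_{0\le k\le n}|W_k|$. First I would apply Lemma~\ref{max-tail} with the choice $U=B(r)$. Since the word metric is symmetric we have $B(r)^{-1}=B(r)$, and the triangle inequality gives $U^{-1}U=B(r)B(r)\subseteq B(2r)$; hence the event $\{M_n>2r\}$ is contained in $\{\exists k\le n:\ W_k\notin U^{-1}U\}$. Lemma~\ref{max-tail} together with the hypothesis $\lambda_\mu(B(r))\le f(r)$ then yields the tail bound
\[
\mathbf{P}(M_n>2r)\le(32n+1)\,\lambda_\mu(B(r))\le(32n+1)f(r).
\]

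Next I would write, via the layer-cake formula and the substitution $t=2r$,
\[
\mathbf{E}[M_n^\alpha]=\int_0^\infty\alpha t^{\alpha-1}\mathbf{P}(M_n>t)\,dt=2^\alpha\int_0^\infty\alpha r^{\alpha-1}\mathbf{P}(M_n>2r)\,dr,
\]
and split the integral at the threshold $R=\varrho(n)$. On the range $r\le R$ I would use the trivial bound $\mathbf{P}(M_n>2r)\le1$, which contributes exactly $2^\alpha R^\alpha$. On the range $r>R$ I would use the tail bound above together with the scaling hypothesis. By definition of $\varrho(n)$ one has $f(R)\le 1/n$, so for $r\ge R$ the hypothesis $f(r)/f(R)\le C_0(r/R)^{-\theta}$ gives
\[
\mathbf{P}(M_n>2r)\le(32n+1)C_0 f(R)(r/R)^{-\theta}\le 33\,C_0\,(r/R)^{-\theta}.
\]

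Plugging this in, the tail contribution becomes a constant multiple of $R^\theta\int_R^\infty r^{\alpha-1-\theta}\,dr$, and here the restriction $\alpha<\theta$ is exactly what is needed: the exponent $\alpha-1-\theta<-1$ makes the integral converge, equal to $R^{\alpha-\theta}/(\theta-\alpha)$, so the tail contributes $2^\alpha\tfrac{33C_0\alpha}{\theta-\alpha}R^\alpha$. Combining the two ranges gives $\mathbf{E}[M_n^\alpha]\le 2^\alpha\bigl(1+\tfrac{33C_0\alpha}{\theta-\alpha}\bigr)\varrho(n)^\alpha$, which is the claim with $C=2^\alpha\bigl(1+\tfrac{33C_0\alpha}{\theta-\alpha}\bigr)$. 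The only delicate point is the boundary behavior at $r=R$: since $\varrho(n)$ is defined as an infimum, I would justify $f(\varrho(n))\le 1/n$ carefully (by approximating $R$ from above and letting the perturbation tend to $0$, or by right-continuity of the relevant sublevel threshold), but this is routine. I expect no genuine obstacle; the scaling hypothesis on $f$ is precisely tailored so that the power-law computation carries through even without $f$ being an exact power, and the assumption $\alpha<\theta$ guarantees integrability of the tail. Specializing to $f(r)=Cr^{-\theta}$ gives $\varrho(n)\simeq(Cn)^{1/\theta}$ and recovers Theorem~\ref{escape}.
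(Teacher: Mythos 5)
Your proof is correct and follows essentially the same route as the paper: both hinge on Lemma~\ref{max-tail} applied to balls, a split at the threshold $\varrho(n)$, and the scaling hypothesis with $\alpha<\theta$ to control the tail. The only difference is cosmetic --- you integrate a continuous layer-cake formula where the paper sums over $e$-adic shells $B(e^{k+1})\setminus B(e^{k})$ --- and the boundary issue you flag (whether $f(\varrho(n))\le 1/n$) is handled correctly by your approximation-from-above remark (the paper sidesteps it by discretizing the threshold as $k_n=\inf\{k:f(e^k)<1/n\}$).
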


\begin{proof}

The proof is along the same lines as Theorem \ref{bound}. Let $k_{n}=\inf\left\{ k:\ f\left(e^{k}\right)<1/n\right\} ,$
note that by definitions $e^{k_{n}-1}<\varrho(n)\le e^{k_{n}}$. We
have 
\[
\mathbf{E}\max_{0\le k\le n}\left|W_{k}\right|^{\alpha}\le e^{\alpha(k_{n}+1)}+\sum_{k\ge k_{n}+1}e^{\alpha(k+1)}\mathbf{P}\left(\exists\ell\le n:\ W_{\ell}\notin B\left(e^{k+1}\right)\setminus B\left(e^{k}\right)\right).
\]
By Lemma \ref{max-tail}, we have 
\[
\mathbf{P}\left(\exists\ell\le n:\ W_{\ell}\notin B\left(e^{k}\right)\right)\le(32n+1)\lambda_{\mu}\left(B\left(e^{k-1}\right)\right).
\]
Therefore 
\begin{align*}
\mathbf{E}\max_{0\le k\le n}\left|W_{k}\right|^{\alpha} & \le e^{\alpha k_{n}}+\sum_{k\ge k_{n}}e^{\alpha(k+1)}(32n+1)\lambda_{\mu}\left(B\left(e^{k-1}\right)\right)\\
 & \le e^{\alpha(k_{n}+1)}+33e^{2\alpha}n\sum_{k\ge k_{n}}e^{\alpha k}f(e^{k})\\
 & =e^{\alpha(k_{n}+1)}+33e^{2\alpha}nf(e^{k_{n}})\sum_{k\ge k_{n}}e^{\alpha k}\frac{f(e^{k})}{f(e^{k_{n}})}\\
 & \le e^{\alpha(k_{n}+1)}+e^{2\alpha}C_{0}e^{\alpha k_{n}}\sum_{k\ge k_{n}}e^{-(\theta-\alpha)(k-k_{n})}\\
 & =\left(e^{\alpha}+\frac{C_{0}e^{2\alpha}}{1-e^{-(\theta-\alpha)}}\right)e^{\alpha k_{n}}\\
 & \le C\rho(n)^{\alpha}.
\end{align*}

\end{proof}

We now review some examples from the literature where symmetric simple
random walks are known to satisfy $\lambda_{\mu}(B(r))\simeq r^{-2}$.
By the universal diffusive lower bound from Lee and Peres \cite{Lee2013}
and Theorem \ref{escape}, we know that in these examples, symmetric
simple random walks exhibit diffusive rate of escape, that is $L_{\mu}(n)\simeq n^{\frac{1}{2}}$. 

\begin{definition}[Tessera \cite{Tessera2013}]

The class $\mathcal{S}$ of finitely generated geometrically elementary
solvable groups is the smallest class of finitely generated groups
such that 
\begin{description}
\item [{(i)}] $\mathcal{S}$ contains all the finitely generated closed
subgroups of $T(d,k)$ for any integer $d$ and local field $k$,
where $T(d,k)$ is the group of invertible upper triangular matrices
of size $d$ in a local field $k$;
\item [{(ii)}] $\mathcal{S}$ is stable under taking finite direct products,
finitely generated subgroups and quotients;
\item [{(iii)}] $\mathcal{S}$ is stable under quasi-isometry.
\end{description}
\end{definition}

The collection $\mathcal{S}$ is exactly the subset of finitely generated
groups in the class GES in \cite{Tessera2013}. Our results actually
extend to compactly generated locally compact unimodular groups, but
for simplicity we restrict ourselves to discrete groups here. 

\begin{example}

The following examples are known to be in $\mathcal{S}$, see Tessera
\cite{Tessera2013}.
\begin{itemize}
\item finitely generated nilpotent groups, more generally polycyclic groups,
\item lamplighter groups $F\wr\mathbb{Z}$, where $F$ is a finite group,
\item solvable Baumslag-Solitar groups $BS(1,q)=\left\langle t,x|txt^{-1}=x^{q}\right\rangle $.
\item torsion free solvable groups of finite Prüfer rank. Recall that a
group has finite Prüfer rank if there is an integer $r$ such that
any of its finitely generated subgroup admits a generating set of
cardinality less or equal to $r$. Return probability and the spectral
profile of simple random walks on torsion free solvable groups of
finite Prüfer rank are also treated in Pittet and Saloff-Coste \cite{PSCrank},
where the authors proved the lower bound $\mu^{(2n)}(\id)\gtrsim e^{-n^{1/3}}$
for $\mu$ a symmetric probability measure with finite generating
support on the group.
\end{itemize}
\end{example}

Note that the class $\mathcal{S}$ is defined in both algebraic and
geometric terms. It contains non-virtually solvable groups such as
$F\wr\mathbb{Z}$ where $F$ is a non-solvable finite group. It also
contains finitely generated groups that are not residually finite,
in particular not linear, see \cite{Tessera2013}.

\begin{corollary}\label{diffusive}

Let $G$ be a group in the class $\mathcal{S}$. Then for any symmetric
probability measure $\mu$ with finite second moment on $G$, there
exists a constant $C=C(\mu,G)$ such that for any $\alpha\in[1,2)$,
\[
\mathbf{E}\max_{0\le k\le n}\left|W_{k}\right|^{\alpha}\le\frac{C}{2-\alpha}n^{\frac{\alpha}{2}}.
\]

\end{corollary}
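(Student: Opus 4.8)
The plan is to verify, for every $G\in\mathcal{S}$ and every symmetric $\mu$ of finite second moment, the hypothesis of Theorem \ref{moment} with the pure power $f(r)=C_1r^{-2}$ and exponent $\theta=2$; the corollary then follows by reading off how the constant produced in Theorem \ref{moment} depends on $\alpha$ as $\alpha\uparrow2$.

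First I would establish the quadratic bound $\lambda_{\mu_0}(B(r))\le C_0r^{-2}$ for the simple random walk $\mu_0$, uniform on a finite symmetric generating set $S$. This is essentially the content of Tessera's work \cite{Tessera2013}: the base groups, namely the finitely generated closed subgroups of $T(d,k)$, satisfy $\lambda_{\mu_0}(B(r))\simeq r^{-2}$, and by the stability theorem of \cite{Tessera2013} an upper bound of the form $\lambda_{\mu_0}(B(r))=O(r^{-2})$ is preserved under finite direct products, finitely generated subgroups, quotients and quasi-isometries---exactly the operations generating the class $\mathcal{S}$. Hence the quadratic bound holds for every $G\in\mathcal{S}$.

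Next I would upgrade from $\mu_0$ to an arbitrary symmetric $\mu$ with finite second moment $m_2=\sum_{g}|g|^2\mu(g)<\infty$, by a comparison of Dirichlet forms using the pseudo-Poincaré inequality (\ref{eq:pp}). Fix $r$ and let $\phi$ be a function supported on $B(r)$ that nearly realizes $\lambda_{\mu_0}(B(r))$. Since $\|\phi-\tau_g\phi\|_2^2\le2|S||g|^2\mathcal{E}_{\mu_0}(\phi)$, summing against $\mu$ gives
\[
\mathcal{E}_{\mu}(\phi)=\frac12\sum_{g}\|\phi-\tau_g\phi\|_2^2\,\mu(g)\le|S|\,m_2\,\mathcal{E}_{\mu_0}(\phi).
\]
As $\phi$ is supported on $B(r)$, this yields $\lambda_{\mu}(B(r))\le|S|\,m_2\,\lambda_{\mu_0}(B(r))\le C_1r^{-2}$ with $C_1=|S|\,m_2\,C_0$, so the hypothesis of Theorem \ref{moment} holds with $f(r)=C_1r^{-2}$ and $\theta=2$, the ratio condition $f(r)/f(s)=(r/s)^{-2}$ holding with constant $1$.

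Finally I would apply Theorem \ref{moment}. Here $\varrho(n)=\inf\{r>0:C_1r^{-2}<1/n\}=(C_1n)^{1/2}$, and the explicit constant produced in its proof is $e^{\alpha}+e^{2\alpha}/\bigl(1-e^{-(\theta-\alpha)}\bigr)$ (the ratio constant being $1$), which with $\theta=2$ reads $e^{\alpha}+e^{2\alpha}/\bigl(1-e^{-(2-\alpha)}\bigr)$. For $\alpha\in[1,2)$ one has $2-\alpha\in(0,1]$ and, by concavity of $x\mapsto1-e^{-x}$, $1-e^{-(2-\alpha)}\ge(1-e^{-1})(2-\alpha)$; since $e^{2\alpha}\le e^{4}$, this constant is at most $C_2/(2-\alpha)$. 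Combining with $\varrho(n)^{\alpha}=C_1^{\alpha/2}n^{\alpha/2}$ (bounded by a constant depending on $C_1$ times $n^{\alpha/2}$ for $\alpha\in[1,2]$) gives $\mathbf{E}\max_{0\le k\le n}|W_k|^{\alpha}\le\frac{C}{2-\alpha}n^{\alpha/2}$ with $C=C(\mu,G)$. The main obstacle is the first step: the quadratic spectral-profile-of-balls bound for the base class $T(d,k)$, together with the verification that Tessera's stability results preserve the \emph{exponent} $\theta=2$ (and not merely some weaker asymptotic form) under all four operations defining $\mathcal{S}$. Once this geometric input is granted, the comparison step and the application of Theorem \ref{moment} are routine.
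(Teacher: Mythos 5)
Your proposal is correct and follows essentially the same route as the paper: invoke Tessera's quadratic bound $\lambda_{\mu_0}(B(r))\simeq r^{-2}$ for GES groups, transfer it to finite-second-moment measures by comparison of Dirichlet forms via the pseudo-Poincar\'e inequality, and apply Theorem \ref{moment} with $\theta=2$. Your explicit tracking of the $\frac{C}{2-\alpha}$ blow-up of the constant as $\alpha\uparrow 2$ is a welcome detail that the paper's one-line proof leaves implicit, but it is the same argument.
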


\begin{proof}

The main result of \cite{Tessera2013} states that on a geometrically
elementary solvable group, $\lambda_{\mu}(B(r))\simeq r^{-2}$ for
a symmetric probability measure $\mu$ of finite generating support.
By standard comparison of Dirichlet forms \cite{PSCstab}, the same
estimates holds for a symmetric probability measures $\mu'$ with
finite second moment. Then Theorem \ref{moment} applies.

\end{proof}

\section{Sharpness of the entropy bounds\label{sec:bubble}}

To illustrate the sharpness of the bound in Theorem \ref{bound} when
the decay of return probability is away from the critical behavior
$\exp(-n^{\frac{1}{2}})$, we consider the family of bubble groups
as in Kotowski and Vir\'{a}g \cite{Kotowski2015}. The bubble groups
were first introduced by Amir and Kozma \cite{amir2016groups} in
their study of growth of harmonic functions on groups. 

\begin{figure}
\includegraphics[scale=0.3]{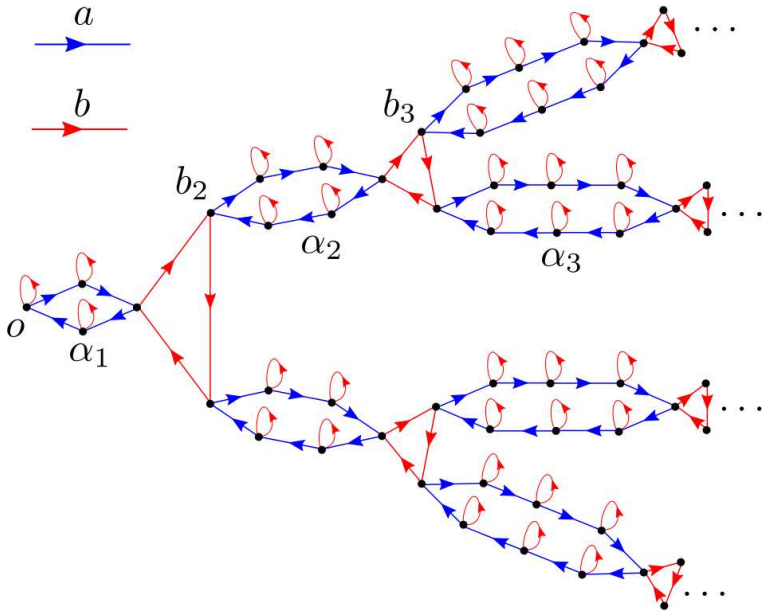}

\caption{the first 3 levels of the Schreier graph, picture from M. Kotowski
and B. Virág \cite{Kotowski2015}}
\end{figure}

The bubble groups are defined via their actions on certain marked
graphs. We first describe these graphs. Let $\mathbf{a}=\left(\alpha_{1},\alpha_{2},\ldots\right)$
be a sequence of natural numbers, $\mathbf{T}$ be a rooted tree with
forward degree sequence $(1,2,2,\ldots)$, that is except the root,
every vertex has two children. The bubble graph $\mathcal{M}_{\mathbf{a}}$
is obtained as follows. Each edge at level $k\ge1$ (we make the convention
that the level of an edge is the level of the child on that edge)
is replaced by a cycle of length $2\alpha_{k}$ called a bubble. Each
vertex at level $k\ge1$ (we ignore the root which is now part of
a cycle of length $2\alpha_{1}$) is blown-up to a $3$-cycle. These
$3$-cycles are called branching cycles. Finally, at each vertex which
belongs only to a bubble (but not to a branching cycle), we add a
self loop. Having chosen an orientation along each cycle (say, clockwise),
we label each edge of the bubble with the letter $a$, each edge of
the branching cycle with the letter $b$, and the self loops at vertices
that belong only to a bubble are labeled with the letter $b$. Figure
1 represents the first $3$ levels of the Schreier graph $\mathcal{M}_{\mathbf{a}}$,
with $\alpha_{1}=2,\alpha_{2}=3$ and $\alpha_{3}=4$.

The bubble group $\Gamma_{\mathbf{a}}$ is a subgroup of the permutation
group of the vertex set of $X_{\mathbf{a}}$ generated by two elements
$a$ and $b$. The action of the permutation $a$ (resp. $b$) on
any vertex $x$ in $\mathcal{M}_{\mathbf{a}}$ is indicated by the
oriented labeled edge at $x$ marked with an $a$ (resp. a $b$).
Informally, $a$ rotates the long bubbles whereas $b$ rotates the
branching cycles. We write the left group action of $g$ on vertex
$x$ as $g\cdot x$. 

Consider the permutational wreath product of the bubble group $\Gamma_{\mathbf{a}}$
with $\mathbb{Z}$ over the Schreier graph $\mathcal{M}_{\mathbf{a}}$,
$G_{\mathbf{a}}=\mathbb{Z}\wr_{\mathcal{M}_{\mathbf{a}}}\Gamma_{\mathbf{a}}$.
Formally, $G$ is the semi-direct product $\left(\oplus_{\mathcal{M}_{\mathbf{a}}}\mathbb{Z}\right)\rtimes\Gamma_{\mathbf{a}}$,
where $\Gamma_{\mathbf{a}}$ acts by permuting the coordinates. A
group element of $G$ is recorded as $(f,\gamma)$, where $f:\mathcal{M}_{\mathbf{a}}\to\mathbb{Z}$
is a function of finite support, and $\gamma\in\Gamma_{\mathbf{a}}$.
Multiplication is given by 
\[
(f,\gamma)(f',\gamma')=(f+\tau_{\gamma}f',\gamma\gamma'),
\]
where $\tau_{\gamma}$ is the translation $\tau_{\gamma}f(x)=f(\gamma^{-1}\cdot x)$
and we use additive notation for $\mathbb{Z}$. We identify $\Gamma_{\mathbf{a}}$
as a subgroup of $G_{\mathbf{a}}$ by the embedding $\gamma\mapsto(\mathbf{0},\gamma)$
for every $\gamma\in\Gamma_{\mathbf{a}}$ and $\mathbb{Z}$ as a subgroup
of $G_{\mathbf{a}}$ by $z\mapsto\left(z\delta_{0},\id_{\Gamma_{\mathbf{a}}}\right)$.

Let $\nu$ be the symmetric probability measure on $\Gamma_{\mathbf{a}}$
defined as 
\[
\nu(a^{\pm1})=\nu(b^{\pm1})=\frac{1}{4}.
\]
Let $\eta$ be the symmetric probability measure on $\mathbb{Z}$
defined as $\eta(\pm1)=\frac{1}{2}$. Consider the \textquotedbl{}switch-walk-switch\textquotedbl{}
measure $\mu=\eta\ast\nu\ast\eta$ on the permutational wreath product
$G_{\mathbf{a}}$. The $\mu$-random walk on $G_{\mathbf{a}}$ can
be described as follows. Let $X_{1},X_{2},\ldots$ be a sequence of
i.i.d.\ random variables on $\Gamma_{\mathbf{a}}$ with distribution
$\nu$ and $Z_{1},Z_{2},\ldots$ be a sequence of i.i.d.\ random
variables on $\mathbb{Z}$ with distribution $\eta$. Then the random
variable $W_{n}=Z_{1}X_{1}Z_{2}\ldots Z_{2n-1}X_{n}Z_{2n}$ has distribution
$\mu^{(n)}$. Write $W_{n}=(L_{n},Y_{n})$, where $L_{n}$ denotes
the lamp configuration at time $n$ and $Y_{n}$ denotes the position
of the walker on the Cayley graph of $\Gamma_{\mathbf{a}}$. Then
we have 
\begin{align*}
Y_{n} & =X_{1}\ldots X_{n},\\
L_{n}(x) & =Z_{1}\mathbf{1}_{\{x=o\}}+\sum_{j=1}^{n-1}\left(Z_{2j}+Z_{2j+1}\right)\mathbf{1}_{\left\{ x=Y_{j}\cdot o\right\} }+Z_{2n}\mathbf{1}_{\left\{ x=Y_{n}\cdot o\right\} }.
\end{align*}
An important point is that the $\mathbb{Z}$-lamp configurations are
updated along the inverted orbit $\left(Y_{j}\cdot o\right)_{j=0}^{n}$.
More information about permutational wreath products and random walks
with \textquotedbl{}switch-walk-switch\textquotedbl{} step distribution
on them can be found in \cite{Amir2012,Kotowski2015} . 

Whether $H_{\mu}(n)$ grows linearly or not is closely related to
the transience/recurrence property of the underlying bubble graph.
Indeed, when the induced random walk $\left(Y_{n}^{-1}\cdot o\right)$
is transient on $\mathcal{M}_{\mathbf{a}}$, the Poisson boundary
of $(G,\mu)$ is non-trivial, see \cite[Theorem 5.1]{Kotowski2015}
and also Bartholdi and Erschler \cite[Section 3]{Bartholdi2016}.
Note that on the bubble graph, the effective resistance from the root
$o$ to the level $k$ branching cycles is quite easy to calculate.
The graph $\mathcal{M}_{\mathbf{a}}$ is recurrent if and only if
\[
\sum_{k=1}^{\infty}\frac{\alpha_{k}}{2^{k}}=\infty.
\]

We now apply the method in Amir and Virág \cite{Amir2012} to give
an explicit entropy lower bound for the $\mu$-random walk on $\mathbb{Z}\wr_{\mathcal{M}_{\mathbf{a}}}\Gamma_{\mathbf{a}}$,
in the case that $\inf_{k}\alpha_{k+1}/\alpha_{k}>2.$ The basic idea
for the lower bound is to collect contribution to entropy from the
lamp configurations over the Schreier graph. The permutational wreath
extension with $\mathbb{Z}$ plays an important role in both the entropy
lower bound and return probability upper bound. Let $Q_{n}$ be the
occupation time measure of the inverted orbit $\left(Y_{j}\cdot o\right)_{j=0}^{n}$
in the first $n$ steps. Namely, for each $x\in\mathcal{M}_{\mathbf{a}}$,
$Q_{n}(x)$ denotes the number of visits of the inverted orbit to
the vertex $x$ in the first $n$ steps,
\begin{equation}
Q_{n}(x)=\sum_{j=0}^{n}\mathbf{1}_{\left\{ x=Y_{j}\cdot o\right\} }.\label{eq:Q}
\end{equation}

\begin{lemma}\label{bubble-entropy}

Let $\mathbf{a}$ be a scaling sequence satisfying 
\[
\inf_{k}\alpha_{k+1}/\alpha_{k}>2.
\]
Then for the $\mu$-random walk on $G_{\mathbf{a}}$ described above,
there exists a constant $c>0$ such that 
\[
H_{\mu}(n)\ge c\left|B_{\mathcal{M}_{\mathbf{a}}}(o,\sqrt{n})\right|\log n.
\]

\end{lemma}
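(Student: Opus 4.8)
The plan is to bound $H_\mu(n)=H(L_n,Y_n)$ from below by the entropy stored in the $\mathbb{Z}$-lamps once the trajectory of the walker is revealed. Write $\mathbf{Y}=(Y_0,\dots,Y_n)$ for the full inverted orbit $(Y_j\cdot o)_{j=0}^{n}$. Since $Y_n$ is a function of $\mathbf{Y}$ and conditioning on more information only decreases entropy,
\[
H_\mu(n)=H(L_n,Y_n)\ge H(L_n\mid Y_n)\ge H(L_n\mid\mathbf{Y}).
\]
The virtue of conditioning on the whole trajectory is that it decouples the lamps across sites: from the formula for $L_n$, the height $L_n(x)$ is the sum of those increments $\xi_j$ (with $\xi_0=Z_1$, $\xi_j=Z_{2j}+Z_{2j+1}$ for $1\le j\le n-1$, and $\xi_n=Z_{2n}$) for which $Y_j\cdot o=x$; once $\mathbf{Y}$ is fixed these index sets are disjoint and the $\xi_j$ are independent. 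Hence the $\{L_n(x)\}_x$ are conditionally independent given $\mathbf{Y}$, so $H(L_n\mid\mathbf{Y})=\sum_x H(L_n(x)\mid\mathbf{Y})$, and taking expectations gives $H_\mu(n)\ge\mathbf{E}\big[\sum_x H(L_n(x)\mid\mathbf{Y})\big]$.

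Next I would estimate each summand. Conditional on $\mathbf{Y}$, by the definition (\ref{eq:Q}) of $Q_n$ the height $L_n(x)$ is a sum of exactly $Q_n(x)$ independent, centered, bounded, lattice-valued increments. By the local central limit theorem its maximal atom is at most $C/\sqrt{Q_n(x)}$, and since $H(X)\ge-\log\max_m\mathbf{P}(X=m)$ for any $\mathbb{Z}$-valued $X$, this yields $H(L_n(x)\mid\mathbf{Y})\ge\tfrac12\log Q_n(x)-C$ for every visited site. Discarding all but the sites with $Q_n(x)\ge n^{c}$ (for a small fixed $c>0$), on which this lower bound is at least $\tfrac{c}{4}\log n$ once $n$ is large, it therefore suffices to prove an occupation-time bound of the form
\[
\mathbf{E}\big[\#\{x:\ Q_n(x)\ge n^{c}\}\big]\ \ge\ c'\,\big|B_{\mathcal{M}_{\mathbf{a}}}(o,\sqrt{n})\big|.
\]

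The heart of the argument, and the step I expect to be the main obstacle, is this occupation-time estimate, and it is where the hypothesis $\inf_k\alpha_{k+1}/\alpha_k>2$ is used. The condition enters in two ways. First, a bubble at level $k$ is a cycle of length $2\alpha_k$, and traversing such a cycle costs time of order $\alpha_k^2$, so the deepest level reached by time $n$ is the level $k_n$ with $\alpha_{k_n}$ of order $\sqrt{n}$; thus the explored part of the Schreier graph sits inside $B_{\mathcal{M}_{\mathbf{a}}}(o,\sqrt{n})$, and the effective resistance from $o$ to level $k$, which equals $\sum_{j\le k}\alpha_j/2^{j}$, is governed by the gap condition and pins down how many bubbles the inverted orbit enters at each level. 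Second, while moving through a cycle of length $\ell$ the walk accumulates local time of order $\ell$ at each of its vertices, so every genuinely explored site carries occupation time that is a fixed positive power of $n$, whence $\log Q_n(x)$ is of order $\log n$ there. The work is to combine these facts quantitatively, which I would do by a first/second-moment argument on the number of sites with $Q_n(x)\ge n^{c}$, using the explicit nested-cycle geometry and the resistance computation to show that with probability bounded away from $0$ this number is comparable to $|B_{\mathcal{M}_{\mathbf{a}}}(o,\sqrt{n})|$, the sum $\sum_x\log Q_n(x)$ being dominated by the deepest levels. The strict inequality $\inf_k\alpha_{k+1}/\alpha_k>2$ is precisely what keeps the walk in the borderline-recurrent regime in which the range still fills a constant fraction of $B_{\mathcal{M}_{\mathbf{a}}}(o,\sqrt{n})$ while the local times remain polynomially large; for a slower growth one would lose the $\log n$ factor, and this threshold is exactly why the resulting exponent covers the range $\beta\in[1/3,1/2)$ of Proposition \ref{bubble-exponent}.
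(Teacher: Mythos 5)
Your first step---conditioning on the trajectory so that the $\mathbb{Z}$-lamps decouple, and converting the entropy of each lamp into $\tfrac12\log Q_n(x)+O(1)$ via the local CLT---is essentially the paper's opening move: the paper writes $H_{\mu}(n)\ge H(W_n\mid Q_n)\ge c_0\mathbf{E}\bigl[\sum_x\log(1+Q_n(x))\bigr]$ using $H_{\nu}(k)\ge c_0\log k$ for simple random walk on $\mathbb{Z}$. Up to that point you are on track.

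The gap is exactly where you say you expect it: the occupation-time estimate is not proved, only described, and the route you sketch for it has a conceptual problem. The measure $Q_n$ counts visits of the \emph{inverted orbit} $(Y_j\cdot o)_{j=0}^n$, which is not a Markov chain on $\mathcal{M}_{\mathbf{a}}$ (only $(Y_j^{-1}\cdot o)$ is a random walk on the Schreier graph). So the heuristics you invoke---``the walk accumulates local time of order $\ell$ at each vertex of a cycle of length $\ell$,'' ``the range fills a constant fraction of the ball''---and the proposed first/second-moment computation all implicitly treat $Q_n$ as the occupation measure of a reversible walk, which it is not; transferring local-time information from the walk to the inverted orbit is precisely the nontrivial point. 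The paper bypasses this with the Amir--Vir\'ag inequality (\ref{eq:f-Qn}): for any sub-additive $f$, $\mathbf{E}\bigl[\sum_x f(Q_n(x))\bigr]\ge npf(1/p)/16$ whenever $p\le\mathbf{P}(T>n)$, where $T$ is the first return of the induced random walk to $o$. The remaining work is then a concrete lower bound on $\mathbf{P}(T>n)$: the probability of reaching the sphere $S_r$ before returning to $o$ equals $1/(2\mathcal{R}(o\leftrightarrow S_r))$ with $\mathcal{R}(o\leftrightarrow S_r)=2^{-k-1}\ell+\sum_{j\le k}2^{-j}\alpha_j\lesssim 2^{-k}(\alpha_k+\ell)$ (this is where $\inf_k\alpha_{k+1}/\alpha_k>2$ enters, making the last term dominate), combined with the fact that the return time after reaching $S_r$ stochastically dominates a sum of interval-crossing times of order $\alpha_k^2+\ell^2$. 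Taking $p_n\asymp 2^k/\sqrt{n}$ in (\ref{eq:f-Qn}) with $f(x)=\log(1+x)$ yields $H_{\mu}(n)\gtrsim 2^k\sqrt{n}\,\log(\sqrt{n}/2^k)\asymp|B_{\mathcal{M}_{\mathbf{a}}}(o,\sqrt n)|\log n$. Without (\ref{eq:f-Qn}) or an equivalent substitute, your claimed bound $\mathbf{E}[\#\{x:Q_n(x)\ge n^{c}\}]\ge c'|B_{\mathcal{M}_{\mathbf{a}}}(o,\sqrt n)|$ remains an unproven assertion about a non-Markovian process, so the proposal as written does not close.
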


\begin{proof}

Denote by $T$ the first time that the induced random walk returns
to $o$,
\[
T=\min\left\{ n>0:\ Y_{n}\cdot o=o\right\} =\min\{n>0:\ Y_{n}^{-1}\cdot o=o\}.
\]
For simple random walk $\nu$ on $\mathbb{Z}$ we have $H_{\nu}(k)\ge c_{0}\log k$.
Let $Q_{n}$ be defined as in (\ref{eq:Q}), then 
\begin{equation}
H_{\mu}(n)\ge H\left(W_{n}|Q_{n}\right)\ge c_{0}\mathbf{E}\left[\sum_{x\in X_{\mathbf{a}}}\log\left(1+Q_{n}(x)\right)\right].\label{eq:H-Q}
\end{equation}
From the proof of \cite[Theorem 9]{Amir2012}, we have that for any
sub-additive function $f:[0,\infty)\to[0,\infty)$, 
\begin{equation}
\mathbf{E}\left[\sum_{x\in X_{\mathbf{a}}}f\left(Q_{n}(x)\right)\right]\ge\frac{npf\left(\frac{1}{p}\right)}{16}\label{eq:f-Qn}
\end{equation}
for all $0<p\le\mathbf{P}(T>n)$. Next we apply (\ref{eq:f-Qn}) with
$f(x)=\log(1+x)$. By (\ref{eq:H-Q}), a lower estimate on $\mathbf{P}(T>n)$
will give a lower bound for entropy. We now follow the idea in the
proof of \cite[Proposition 18]{Amir2012}. 

Let $S_{r}=\{x\in\mathcal{M}_{\mathbf{a}}:\ d(o,x)=r\}$ be the set
of vertices that are distance $r$ from the root $o$. Let $\sigma_{r}$
denote the hitting time of the set $S_{r}$, and $T_{r}^{\ast}$ be
the first return to $o$ after hitting $S_{r}$, that is $T_{r}^{*}=\inf\{n:\ Y_{n}^{-1}\cdot o=o,\ n>\sigma_{r}\}.$
Then on the event that the inverted orbit $\{Y_{j}^{-1}\cdot o\}_{j\ge1}$
hits $S_{r}$ before $o$ and the first return time to $o$ after
hitting $S_{r}$ is later than $n$, we have the first return to $o$
is after time $n$. In other words we have inclusion of events:
\[
\{T>n\}\supseteq\left\{ \{Y_{j}^{-1}\cdot o\}_{j\ge1}\ \mbox{hits }S_{r}\mbox{ before }o\right\} \cap\{T_{r}^{*}>n\}.
\]
Let $\mathcal{R}(o\leftrightarrow S_{r})$ be the effective resistance
between the root $o$ and the level set $S_{r}$, then 
\[
\mathbf{P}\left(\{Y_{j}^{-1}\cdot o\}_{j\ge1}\ \mbox{hits }S_{r}\mbox{ before }o\right)=\frac{1}{2\mathcal{R}(o\leftrightarrow S_{r})}.
\]
We have for $r=\ell+\sum_{j=1}^{k}\alpha_{j}+k$, where $0\le\ell\le\alpha_{k+1}$,

\[
\mathbf{P}\left(\{Y_{j}^{-1}\cdot o\}_{j\ge1}\ \mbox{hits }S_{r}\mbox{ before }o\right)=\frac{1}{2\left(2^{-k-1}\ell+\sum_{j=1}^{k}2^{-j}\alpha_{j}\right)}\ge\frac{1}{2^{-k+2}(\alpha_{k}+\ell)}.
\]
In the last step we used the assumption that $\inf_{k}\alpha_{k+1}/\alpha_{k}>2.$ 

Let $\tilde{\sigma}_{\ell}$ denote the hitting time of $0$ of lazy
simple random walk on the interval $[0,\ell]\cap\mathbb{Z}$ with
reflecting boundary, starting from the end $\ell$. Then $T_{r}^{*}$
stochastically dominates $\tilde{\sigma}_{\ell}+\tilde{\sigma}_{\alpha_{k}}+...+\tilde{\sigma}_{\alpha_{1}}$.
Therefore from classical estimates of lazy simple random walk on an
interval, we have that there exists an absolute constant $c_{1}>0$
such that 

\[
P\left(T_{r}^{\ast}>c_{1}\left(\alpha_{k}^{2}+\ell^{2}\right)\right)\ge\frac{1}{4}.
\]
From the Markov property, we have 
\[
P\left(T>c\left(\alpha_{k}^{2}+\ell^{2}\right)\right)\ge\frac{1}{4}\frac{1}{2^{-k+2}(\alpha_{k}+\ell)}=\frac{2^{k}}{16\left(\alpha_{k}+\ell\right)}.
\]
For time $n=c\left(\alpha_{k}^{2}+\ell^{2}\right)$, take $p_{n}=\frac{2^{k}}{16\left(\alpha_{k}+\ell\right)}$,
by (\ref{eq:f-Qn}) 
\begin{equation}
H_{\mu}(n)\ge c_{0}\mathbf{E}\left[\sum_{x\in X_{\mathbf{a}}}\log\left(1+Q_{n}(x)\right)\right]\ge\frac{c_{0}np_{n}\log\left(1+\frac{1}{p_{n}}\right)}{16}.\label{eq:Qn-p}
\end{equation}
Therefore for $n\in\left(c\alpha_{k}^{2},c\alpha_{k}^{2}+c\alpha_{k+1}^{2}\right),$
\begin{equation}
H_{\mu}(n)\ge c_{0}'n\left(\frac{2^{k}}{\sqrt{n}}\right)\log\left(\frac{\sqrt{n}}{2^{k}}\right).\label{eq:h-lower}
\end{equation}
Note that the volume of the ball $B_{\mathcal{M}_{\mathbf{a}}}(o,\sqrt{n})$
is comparable to $2^{k}\sqrt{n}$ for $n\in\left(c\alpha_{k}^{2},c\alpha_{k}^{2}+c\alpha_{k+1}^{2}\right)$,
the statement follows from (\ref{eq:h-lower}).

\end{proof}

The spectral profile $\Lambda_{\mu,G_{\mathbf{a}}}$ is estimated
in \cite[Theorem 5.8]{perm}. In particular, if there exists a constant
$\theta>1$ such that the scaling sequence $\mathbf{a}$ satisfies
\[
\inf_{k}\frac{\alpha_{k+1}}{\alpha_{k}}\ge\theta,
\]
then there exists a constant $C=C(\theta)>0$ such that 
\[
\frac{1}{Cr^{2}}\le\Lambda_{\mu,G_{\mathbf{a}}}\left(\left|B_{\mathcal{M}_{\mathbf{a}}}(o,r)\right|!\right)\le\frac{C}{r^{2}}.
\]
It follows from the Coulhon-Grigor'yan theory \cite{CNash,CG} that
in this case, the decay of the return probability satisfies
\begin{equation}
\mu^{(2n)}(\id)\simeq\exp\left(-\frac{n}{\phi(n)^{2}}\right),\label{eq:bubble-return}
\end{equation}
where $\phi$ is the inverse of the function $r\to r^{2}\left|B_{\mathcal{M}_{\mathbf{a}}}(o,r)\right|\log r$. 

In the special case where $\theta>2$, we have 
\[
\frac{n}{\phi(n)^{2}}\le n^{\frac{\theta+1}{3\theta+1}}(\log n)^{\frac{2\theta}{3\theta+1}}.
\]
Therefore the assumptions of Theorem \ref{bound} are satisfied, we
can deduce an entropy upper bound from (\ref{eq:bubble-return}).
In fact it is easier to use the upper bound on the spectral profile
directly, see Remark \ref{profile}. From the bound $\Lambda_{\mu,G_{\mathbf{a}}}\left(\left|B_{\mathcal{M}_{\mathbf{a}}}(o,r)\right|!\right)\le\frac{C}{r^{2}}$,
we have that 
\[
\tilde{\rho}(n)=\inf\left\{ x:\ \Lambda_{\mu}(e^{x})\le\frac{1}{n}\right\} \le\log\left(\left|B_{\mathcal{M}_{\mathbf{a}}}\left(o,\sqrt{Cn}\right)\right|!\right).
\]
Therefore by Theorem \ref{bound}, we have

\[
H_{\mu}(n)\le\log\left(\left|B_{\mathcal{M}_{\mathbf{a}}}(o,\sqrt{Cn})\right|!\right)\le C'\left|B_{\mathcal{M}_{\mathbf{a}}}(o,\sqrt{n})\right|\log n.
\]
Comparing the bound to Lemma \ref{bubble-entropy}, we see that when
the scaling sequence $\mathbf{a}$ satisfies $\inf_{k}\alpha_{k+1}/\alpha_{k}>2$,
the entropy upper bound deduced from return probability by Theorem
\ref{bound} is sharp. 

\begin{proof}[Proof of Proposition \ref{bubble-exponent}]

The wreath product $\mathbb{Z\wr\mathbb{Z}}$ satisfies the statement
with $\beta=1/3$.

Take $\alpha_{k}=2^{\theta k}$ with $\theta\in(1,\infty)$ as the
scaling sequence in the bubble graph $\mathcal{M}_{\mathbf{a}}$.
Then by \cite[Theorem 5.8]{perm}, see also \cite[Example 5.11]{perm},
on the group $G_{\mathbf{a}}=\mathbb{Z}\wr_{\mathcal{M}_{\mathbf{a}}}\Gamma_{\mathbf{a}}$,
we have
\[
\mu^{(2n)}(\id)\simeq\exp\left(-n^{\frac{\theta+1}{3\theta+1}}(\log n)^{\frac{2\theta}{3\theta+1}}\right).
\]
By Lemma \ref{bubble-entropy}, we have
\[
H_{\mu}(n)\ge\frac{c(\theta-1)}{2\theta}n^{\frac{\theta+1}{2\theta}}\log n.
\]
Using Theorem \ref{bound}, we obtain an upper bound on $H_{\mu}(n)$
from the lower bound on $\mu^{(2n)}(\id)$, it matches the entropy
lower bound above.

\end{proof}

\section{Extension to random walks on transitive graphs}

In this section we explain how to extend Theorem \ref{liouville},
\ref{bound-1} and \ref{escape} to symmetric random walks on transitive
graphs. Let $X=(V,E)$ be a vertex transitive graph, which is locally
finite, infinite and connected. By $\mbox{Aut}(X)$ we denote the
full automorphism group of $X$. The group $\mbox{Aut}(X)$ is locally
compact, equipped with the topology of pointwise convergence. In what
follows, let $\Gamma$ be a closed subgroup of $\mbox{Aut}(X)$ that
acts transitively on $X$. 

We say a Markov operator $P$ on $X$ is \emph{space homogenous} if
it is invariant under $\Gamma$, that is, 
\[
P(g\cdot x,g\cdot y)=P(x,y)\ \mbox{for all }g\in\Gamma,x,y\in X.
\]
Further we may assume that $P$ is irreducible. The Poisson boundary
and entropy theory of space homogenous Markov chains was systematically
developed in Kaimanovich and Woess \cite{Kaimanovich2002}. Let $H_{P}(n)$
be the Shannon entropy of the $n$-step transition probability,
\[
H_{P}(n)=-\sum_{x\in V}P^{n}(o,x)\log P^{n}(o,x).
\]
By \cite{Kaimanovich2002}, for transition operator of finite entropy
$H_{P}(1)<\infty$, the asymptotic entropy $h(P)=\lim_{n\to\infty}H_{P}(n)/n$
exists, and $(X,P)$ has the Liouville property if and only if $h(P)=0$. 

Fix a reference point $o\in X$. We will only consider symmetric Markov
operators, $P(x,y)=P(y,x)$ for all $x,y\in V$. 

\begin{theorem}\label{transitive}

Let $P$ be a space homogenous symmetric Markov chain on $X$ such
that $H_{P}(1)<\infty$. Theorem \ref{liouville} and \ref{bound}
hold with $\mu^{(2n)}(\id)$ replaced by $P^{(2n)}(o,o)$ and $H_{\mu}(n)$
replaced by $H_{P}(n)$. 

\end{theorem}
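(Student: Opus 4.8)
The plan is to observe that the proofs of Theorem~\ref{liouville} and Theorem~\ref{bound} use the group $G$ only through three ingredients: the Coulhon spectral-profile bound of Lemma~\ref{Lambda-upper}; the tail estimate of Lemma~\ref{tail}, obtained from a test-function coboundary together with the Markov type inequality \eqref{eq:markov}; and the soft entropy bookkeeping (Shannon's theorem and convexity of entropy). The last two ingredients are purely probabilistic and transfer to any reversible chain, so the real task is to reconstruct Lemma~\ref{tail} for the space-homogeneous chain $(Z_n)$ with $Z_0=o$ and transition operator $P$. To this end I would use that $K=\mathrm{Stab}_\Gamma(o)$ is compact (stabilizers in $\mathrm{Aut}(X)$ are profinite for locally finite $X$) and that $X\cong\Gamma/K$, and realize $(Z_n)$ as the projection $Z_n=G_n\cdot o$ of a symmetric random walk $(G_n)$ on the locally compact unimodular group $\Gamma$ with a step distribution $\mu$ that lifts $P$; the existence of such a symmetric $\mu$ is part of the Kaimanovich--Woess framework \cite{Kaimanovich2002}.

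Next I would build the coboundary at the level of $\Gamma$ but arrange for it to descend to $V$. Fix a finite $K$-invariant set $U\subset V$ (a union of $K$-orbits) and a $K$-invariant test function $\phi$ supported on $U$ that nearly realizes the relevant spectral profile, and set $\sigma(g)=\pi(g)\phi-\phi$ where $(\pi(g)\phi)(x)=\phi(g^{-1}\cdot x)$. Since $\phi$ is $K$-invariant it is a $K$-fixed vector, so $\sigma(gk)=\sigma(g)$ and $\sigma$ descends to a well-defined map $\bar\sigma\colon V\to\ell^2(V)$ with $\bar\sigma(Z_n)=\sigma(G_n)$. Applying \eqref{eq:markov} to the cocycle $\sigma$ and the $\Gamma$-walk $(G_n)$ --- its proof uses only the group law, i.i.d.\ increments and a Hilbert target, all available for locally compact $\Gamma$ --- and using that averaging $\pi$ against $\mu$ reproduces $P$, I get $\mathbf{E}\|\sigma(G_1)\|_2^2=2\mathcal{E}_P(\phi)$, exactly as in Lemma~\ref{tail}. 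Since $\|\sigma(g)\|_2^2=2\|\phi\|_2^2$ whenever $g\cdot U\cap U=\emptyset$, this yields $\mathbf{P}(Z_n\notin V_U)\le n\lambda_P(U)$, where $V_U=\{g\cdot o:\ g\cdot U\cap U\neq\emptyset\}$ is the vertex-level detection set replacing $U^{-1}U$.

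I expect the size control of $V_U$ to be the main obstacle, and it is the only point where the quotient structure is genuinely different from a group: a vertex displacement $Z_n=G_n\cdot o$ no longer determines $G_n$, so $V_U$ can be strictly larger than the group analogue $U^{-1}U$. The estimate I would aim for is $V_U\subseteq\bigcup_{u,v\in U}\{g\cdot o:\ g\cdot u=v\}$ together with $|\{g\cdot o:\ g\cdot u=v\}|=|\mathrm{Stab}(u)\cdot o|$. Here unimodularity of $\Gamma$ enters decisively: the orbit-balance condition it provides (the same condition underlying the existence of the symmetric lift $\mu$) gives $|\mathrm{Stab}(u)\cdot o|=|K\cdot u|\le|U|$ for $K$-invariant $U$, whence $|V_U|\le|U|^3$ and $\log|V_U|\le 3\log|U|$. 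Thus the group bound $\log|U^{-1}U|\le 2\log|U|$ is only weakened by a harmless constant factor.

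Finally I would check that Coulhon's estimate survives the restriction to $K$-invariant test functions, so that Lemma~\ref{Lambda-upper} holds verbatim with $\mu^{(2n)}(\id)$ replaced by $P^{(2n)}(o,o)$; this is immediate because $P^{(2n)}(o,o)$ is a $K$-invariant quantity and the near-extremal functions appearing in the profile/return-probability correspondence are the heat-kernel functions $P^m\delta_o$, which are themselves $K$-invariant. With the $K$-invariant profile bound in hand and detection sets $V_{U_k}$ of size at most $|U_k|^3$, the proofs of Theorem~\ref{liouville} and Theorem~\ref{bound} then transcribe line by line --- replacing $U_k^{-1}U_k$ by $V_{U_k}$, $W_n$ by $Z_n$, and $H_\mu(n)$ by $H_P(n)$ --- with only the numerical constants changing, which establishes Theorem~\ref{transitive}.
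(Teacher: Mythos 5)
Your overall strategy is the paper's: lift $P$ to a symmetric measure $\mu$ on a closed transitive $\Gamma\le\mathrm{Aut}(X)$, rerun the coboundary/Markov-type tail bound and the entropy bookkeeping there, and transfer back to $X$. But there is one genuine gap: you take the unimodularity of $\Gamma$ as given (``the locally compact unimodular group $\Gamma$''), and everything in your argument that matters --- the existence of a \emph{symmetric} lift $\mu$ (for non-unimodular $\Gamma$ the lift of a symmetric $P$ satisfies $d\mu(g^{-1})=\Delta(g)\,d\mu(g)$, not $d\mu(g^{-1})=d\mu(g)$) and the orbit-balance identity $|\mathrm{Stab}(u)\cdot o|=|K\cdot u|$ --- hinges on it. Unimodularity is false for general locally finite transitive graphs (e.g.\ the Diestel--Leader graphs $DL(m,n)$, $m\neq n$), so it must be \emph{derived} from the hypotheses. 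The paper does this as the first step of its proof: the subexponential lower bound on $P^{2n}(o,o)$ assumed in Theorems \ref{liouville} and \ref{bound} forces the spectral radius $\rho(P)=1$, and the Soardi--Woess theorem then gives that $\Gamma$ is amenable \emph{and unimodular}. Without this input your construction does not get off the ground; with it, the rest of your argument is sound.

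Apart from that, your execution diverges from the paper's in an interesting and arguably more careful way. The paper works entirely upstairs: it lifts a finite $U\subset X$ to the compact open set $\tilde U=\cup\{g_xK:\,g_x\cdot o\in U\}$, applies Lemma \ref{tail} on $\Gamma$ to get $\mathbf{P}(\tilde W_n\notin\tilde U^{-1}\tilde U)\le n\lambda_\mu(\tilde U)\le n\lambda_P(U)$, and transfers entropy via the identity (\ref{eq:Hn}) for the bi-$K$-invariant density of $\mu^{(n)}$ with respect to Haar measure. You instead work with $K$-invariant test functions so that the coboundary descends to $V$, replace $U^{-1}U$ by the vertex-level detection set $V_U$, and control $|V_U|\le|U|^3$ by the mass-transport identity; your observation that the factor $2\log|U|$ only degrades to $3\log|U|$ is correct and harmless for both proofs. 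This makes explicit a volume estimate that the paper leaves implicit (the Haar measure of $\tilde U^{-1}\tilde U$), at the cost of having to justify that Coulhon's bound, i.e.\ Lemma \ref{Lambda-upper}, survives the restriction to $K$-invariant sets and functions; your justification (the near-extremal functions in Coulhon's argument are truncations of $P^m\delta_o$, which are $K$-invariant by space-homogeneity) is correct in substance, though it deserves a sentence of verification rather than the word ``immediate.'' Once the unimodularity step is supplied, your proof is a valid variant of the paper's.
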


For each vertex $x\in X$, let $\Gamma_{x}$ be its stabilizer, $\Gamma_{x}=\{g\in\Gamma,\ g\cdot x=x\}$.
Write $K=\Gamma_{o}$. Since $K$ is open and compact, we can normalize
the left-invariant Haar measure $m_{\Gamma}$ on $\Gamma$ such that
$m_{\Gamma}(K)=1$. Denote by $\hat{m}_{\Gamma}$ the involution of
$m_{\Gamma}$, that is $\hat{m}_{\Gamma}(A)=m_{\Gamma}(A^{-1})$.
The \emph{modular function} $\Delta$ of the group $\Gamma$ is defined
as 
\[
\Delta(g)=\Delta_{\Gamma}(g)=\frac{dm_{\Gamma}}{d\hat{m}_{\Gamma}}(g).
\]
We say the group $\Gamma$ is \emph{unimodular} if $\Delta\equiv1$. 

For a vertex $x$ in the graph $X$, consider the coset of $K$ in
$\Gamma$ that moves $o$ to $x$, $\{g\in\Gamma,\ g\cdot o=x\}=g_{x}K$,
where $g_{x}\cdot o=x$. Since the graph $X$ is assumed to be locally
finite and connected, we have that $\left\{ g_{x}K:x\sim o\mbox{ or }x=o\right\} $
generates the group $\Gamma$. 

By \cite[Proposition 2.15]{Kaimanovich2002}, there is a one-to-one
correspondence between $\Gamma$-invariant Markov operators on $X$
and bi-$K$-invariant probability measures on $\Gamma$. Namely, given
a bi-$K$-invariant probability measure $\mu$ on $\Gamma$, define
$P_{\mu}$ as 
\[
P_{\mu}(g\cdot o,h\cdot o)=\mu(g^{-1}hK),
\]
then $P_{\mu}$ is a $\Gamma$-invariant Markov operator on $X$.
Note that $P_{\mu}$ is well-defined because $\mu$ is bi-$K$-invariant.
In the other direction, set 
\begin{equation}
d\mu(g)=P(o,g\cdot o)dm_{\Gamma}(g),\label{eq:mu-P}
\end{equation}
then $P=P_{\mu}$. This correspondence allows one to lift a $\Gamma$-invariant
Markov chain on $X$ to a random walk on $\Gamma$ with step distribution
$\mu$. Recall that $dm_{\Gamma}$ is normalized such that $m_{\Gamma}(K)=1$.
Let $d\nu_{K}(g)=dm_{\Gamma}(g)\mathbf{1}_{\{g\in K\}}$ be the restriction
of $m_{\Gamma}$ to $K$. Then $\nu_{K}$ is the normalized Haar measure
on the compact group $K$. Note that since $\mu$ is bi-$K$-invariant,
$\nu_{K}\ast\mu^{(n)}=\mu^{(n)}$ and it is bi-$K$-invariant. Then
by the formula (\ref{eq:mu-P}), we have that

\begin{equation}
P^{n}(o,x)=\frac{d\mu^{(n)}}{dm_{\Gamma}}(g_{x}),\ \mbox{for any }g_{x}\ \mbox{such that }g_{x}\cdot o=x.\label{eq:P}
\end{equation}
It follows that for entropy we have the identity
\begin{equation}
H_{P}(n)=-\int_{\Gamma}\log\left(\frac{d\mu^{(n)}}{dm_{\Gamma}}(g)\right)d\mu^{(n)}(g).\label{eq:Hn}
\end{equation}

We now focus on $\Gamma$-invariant Markov chains on $X$ that are
symmetric, $P(x,y)=P(y,x)$ for all $x,y\in V$. Consider the associated
Dirichlet form 
\[
\mathcal{E}_{P}(f)=\frac{1}{2}\sum_{x,y\in G}(f(x)-f(y))^{2}P(x,y),
\]
and define 
\begin{equation}
\lambda_{P}(\Omega)=\inf\{\mathcal{E}_{P}(f):\mbox{support}(f)\subset\Omega,\|f\|_{\ell^{2}(X)}=1\}.\label{def-eig-1}
\end{equation}
One can define the spectral profile $\Lambda_{P}$ in the same way
as before. Let $\rho(P)=\sup_{f\neq\mathbf{0}}\frac{\left\Vert Pf\right\Vert _{\ell^{2}(X)}}{\left\Vert f\right\Vert _{\ell^{2}(X)}}$
be the spectral radius of $P$. A fundamental result due to Soardi
and Woess \cite{Soardi1990} states that the spectral radius of an
irreducible, $\Gamma$-invariant and symmetric Markov operator $P$
is $1$ if and only if $\Gamma$ is both amenable and unimodular. 

\begin{proof}[Proof of Theorem \ref{transitive}]

The assumption on the sub-exponential decay of $P^{n}(o,o)$ implies
that $\rho(P)=1$, therefore by \cite{Soardi1990} we have that $\Gamma$
is both amenable and unimodular. 

Note that when $\Gamma$ is unimodular, a symmetric $\Gamma$-invariant
Markov operator $P$ lifts to a symmetric probability measure on $\Gamma$.
Indeed, by (\ref{eq:P}),
\begin{align*}
d\mu(g^{-1}) & =P(o,g^{-1}\cdot o)dm_{\Gamma}(g^{-1})=P(g^{-1}\cdot o,o)dm_{\Gamma}(g^{-1})\ \ \ (\mbox{symmetry})\\
 & =P(o,g\cdot o)dm_{\Gamma}(g^{-1})\ \ \ (\mbox{invariance under }\Gamma)\\
 & =P(o,g\cdot o)dm_{\Gamma}(g)\ \ \ (\mbox{unimodularity})\\
 & =d\mu(g).
\end{align*}

The Coulhon-Grigor'yan theory that relates the spectral profile of
$P$ to the decay of $P^{(2n)}(o,o)$ works for more general reversible
random walks on graphs, see \cite{CNash}. In particular, Lemma \ref{Lambda-upper}
is valid with $\mu^{(2n)}(\id)$ replaced by $P^{2n}(o,o)$ and $\Lambda_{\mu}$
replaced by $\Lambda_{P}$. Note also that the Markov type inequality
(\ref{eq:markov}) holds for general symmetric probability measure
$\mu$ on a locally compact group $G$. 

Given a finite set $U$ in the graph $X$, we can lift $U$ up to
a set $\tilde{U}$ in $\Gamma$, that is
\[
\tilde{U}=\cup\{g_{x}K,\ g_{x}\cdot o\in U\}.
\]
Denote by $\tilde{W}_{n}$ a random walk on $\Gamma$ with step distribution
$\mu$. Lemma \ref{tail} holds for $\tilde{W}_{n}$:
\[
\mathbf{P}\left(W_{n}\notin\tilde{U}^{-1}\tilde{U}\right)\le n\lambda_{\mu}\left(\tilde{U}\right)\le n\lambda_{P}\left(U\right).
\]
It is crucial here that $\mu$ is a symmetric probability measure
on $\Gamma$, which is a consequence of unimodularity of $\Gamma$.
Then as in Section \ref{sec:entropy}, the proof of Theorem \ref{liouville}
and \ref{bound} goes through for $(G,\mu)$ and yields upper estimates
for $H_{\mu}(n)$ as stated. By the identity (\ref{eq:Hn}), the same
conclusions hold for $H_{P}(n)$. 

\end{proof}

Finally, we remark that Theorem \ref{moment} holds for a space homogenous
symmetric Markov operator $P$ with $\lambda_{\mu}\left(B(\id,r)\right)$
replaced by $\lambda_{P}(B(o,r))$, and the distance $\left|\cdot\right|$
taken to be the graph distance on $X$. The proof follows the same
procedure of lifting $P$ to a probability measure on $\Gamma$. Again,
the assumption on the decay of the spectral profile of balls implies
that $\rho(P)=1$, therefore the measure $\mu$ on $\Gamma$ is symmetric.

\medskip{}

\textbf{Acknowledgements.} We thank Micha\l{} Kotowski and Balint
Virág for providing Figure 1, and Ryokichi Tanaka for reading the
manuscript carefully and giving helpful comments. We thank the anonymous
referees for comments that improved the paper.

\bibliographystyle{alpha}
\bibliography{Thesis}

\def\cprime{$'$}
\begin{thebibliography}{dCTV07}

\bibitem[AK16]{amir2016groups}
Gideon Amir and Gady Kozma.
\newblock Groups with minimal harmonic functions as small as you like.
\newblock {\em arXiv preprint arXiv:1605.07593}, 2016.

\bibitem[Ale92]{Alex}
G.~Alexopoulos.
\newblock A lower estimate for central probabilities on polycyclic groups.
\newblock {\em Canad. J. Math.}, 44(5):897--910, 1992.

\bibitem[ANP09]{Austin2009}
Tim Austin, Assaf Naor, and Yuval Peres.
\newblock The wreath product of {$\Bbb Z$} with {$\Bbb Z$} has {H}ilbert
  compression exponent {$\frac{2}{3}$}.
\newblock {\em Proc. Amer. Math. Soc.}, 137(1):85--90, 2009.

\bibitem[AV12]{Amir2012}
Gideon Amir and B{\'a}lint Vir{\'a}g.
\newblock Speed exponents of random walks on groups.
\newblock {\em To appear in {I}nternational {M}athematics {R}esearch {N}otices,
  arXiv:1203.6226}, 2012.

\bibitem[Ave76]{Avez1976}
A.~Avez.
\newblock Harmonic functions on groups.
\newblock In {\em Differential geometry and relativity}, pages 27--32.
  Mathematical Phys. and Appl. Math., Vol. 3. Reidel, Dordrecht, 1976.

\bibitem[BE16]{Bartholdi2016}
Laurent Bartholdi and Anna Erschler.
\newblock Poisson--furstenberg boundary and growth of groups.
\newblock {\em Probability Theory and Related Fields}, pages 1--26, 2016.

\bibitem[BGT87]{BGT}
N.~H. Bingham, C.~M. Goldie, and J.~L. Teugels.
\newblock {\em Regular variation}, volume~27 of {\em Encyclopedia of
  Mathematics and its Applications}.
\newblock Cambridge University Press, Cambridge, 1987.

\bibitem[BHM08]{Blach`ere2008}
S{\'e}bastien Blach{\`e}re, Peter Ha{\"{\i}}ssinsky, and Pierre Mathieu.
\newblock Asymptotic entropy and {G}reen speed for random walks on countable
  groups.
\newblock {\em Ann. Probab.}, 36(3):1134--1152, 2008.

\bibitem[CG97]{CG}
Thierry Coulhon and Alexander Grigor'yan.
\newblock On-diagonal lower bounds for heat kernels and {M}arkov chains.
\newblock {\em Duke Math. J.}, 89(1):133--199, 1997.

\bibitem[CGP01]{CGP}
T.~Coulhon, A.~Grigor'yan, and C.~Pittet.
\newblock A geometric approach to on-diagonal heat kernel lower bounds on
  groups.
\newblock {\em Ann. Inst. Fourier (Grenoble)}, 51(6):1763--1827, 2001.

\bibitem[Cou96]{CNash}
Thierry Coulhon.
\newblock Ultracontractivity and {N}ash type inequalities.
\newblock {\em J. Funct. Anal.}, 141(2):510--539, 1996.

\bibitem[dCTV07]{Cornulier2007}
Yves de~Cornulier, Romain Tessera, and Alain Valette.
\newblock Isometric group actions on {H}ilbert spaces: growth of cocycles.
\newblock {\em Geom. Funct. Anal.}, 17(3):770--792, 2007.

\bibitem[Der80]{Derriennic1980}
Yves Derriennic.
\newblock Quelques applications du th\'eor\`eme ergodique sous-additif.
\newblock 74:183--201, 4, 1980.

\bibitem[Ers03]{Erschlerdrift}
Anna Erschler.
\newblock On drift and entropy growth for random walks on groups.
\newblock {\em Ann. Probab.}, 31(3):1193--1204, 2003.

\bibitem[Ers06]{Erschler2006}
Anna Erschler.
\newblock Piecewise automatic groups.
\newblock {\em Duke Math. J.}, 134(3):591--613, 2006.

\bibitem[GK04]{Guentner2004}
Erik Guentner and Jerome Kaminker.
\newblock Exactness and uniform embeddability of discrete groups.
\newblock {\em J. London Math. Soc. (2)}, 70(3):703--718, 2004.

\bibitem[Gou16]{Gournay2014}
Antoine Gournay.
\newblock The {L}iouville property and {H}ilbertian compression.
\newblock {\em Ann. Inst. Fourier (Grenoble)}, 66(6):2435--2454, 2016.

\bibitem[Kai91]{Kaimanovich1991}
Vadim~A. Kaimanovich.
\newblock Poisson boundaries of random walks on discrete solvable groups.
\newblock In {\em Probability measures on groups, {X} ({O}berwolfach, 1990)},
  pages 205--238. Plenum, New York, 1991.

\bibitem[KV83]{KV}
V.~A. Ka{\u\i}manovich and A.~M. Vershik.
\newblock Random walks on discrete groups: boundary and entropy.
\newblock {\em Ann. Probab.}, 11(3):457--490, 1983.

\bibitem[KV15]{Kotowski2015}
Micha{\l} Kotowski and B{\'a}lint Vir{\'a}g.
\newblock Non-{L}iouville groups with return probability exponent at most 1/2.
\newblock {\em Electron. Commun. Probab.}, 20:no. 12, 2015.

\bibitem[KW02]{Kaimanovich2002}
Vadim~A. Kaimanovich and Wolfgang Woess.
\newblock Boundary and entropy of space homogeneous {M}arkov chains.
\newblock {\em Ann. Probab.}, 30(1):323--363, 2002.

\bibitem[LP13]{Lee2013}
James~R Lee and Yuva Peres.
\newblock Harmonic maps on amenable groups and a diffusive lower bound for
  random walks.
\newblock {\em The Annals of Probability}, 41(5):3392--3419, 2013.

\bibitem[LP16]{LPbook}
Russell Lyons and Yuval Peres.
\newblock {\em Probability on Trees and Networks}.
\newblock Cambridge University Press, 2016.

\bibitem[NP08]{Naor2008}
Assaf Naor and Yuval Peres.
\newblock Embeddings of discrete groups and the speed of random walks.
\newblock {\em Int. Math. Res. Not. IMRN}, 2008.

\bibitem[PSC00]{PSCstab}
Ch. Pittet and L.~Saloff-Coste.
\newblock On the stability of the behavior of random walks on groups.
\newblock {\em J. Geom. Anal.}, 10(4):713--737, 2000.

\bibitem[PSC02]{Pittet2002}
C.~Pittet and L.~Saloff-Coste.
\newblock On random walks on wreath products.
\newblock {\em Ann. Probab.}, 30, no.2:948--977, 2002.

\bibitem[PSC03]{PSCrank}
Ch. Pittet and L.~Saloff-Coste.
\newblock Random walks on finite rank solvable groups.
\newblock {\em J. Eur. Math. Soc. (JEMS)}, 5(4):313--342, 2003.

\bibitem[SCZ15]{perm}
Laurent Saloff-Coste and Tianyi Zheng.
\newblock Isoperimetric profiles and random walks on some permutation wreath
  products.
\newblock {\em To appear in Revista Matematica Iberoamericana,
  arXiv:1510.08830}, 2015.

\bibitem[SCZ16]{Saloff-Coste2014}
Laurent Saloff-Coste and Tianyi Zheng.
\newblock Random walks and isoperimetric profiles under moment conditions.
\newblock {\em Ann. Probab.}, 44(6):4133--4183, 2016.

\bibitem[SW90]{Soardi1990}
Paolo~M. Soardi and Wolfgang Woess.
\newblock Amenability, unimodularity, and the spectral radius of random walks
  on infinite graphs.
\newblock {\em Math. Z.}, 205(3):471--486, 1990.

\bibitem[Tes11]{Tessera2011}
Romain Tessera.
\newblock Asymptotic isoperimetry on groups and uniform embeddings into
  {B}anach spaces.
\newblock {\em Commentarii Mathematici Helvetici}, 86(3):499--535, 2011.

\bibitem[Tes13]{Tessera2013}
Romain Tessera.
\newblock Isoperimetric profile and random walks on locally compact solvable
  groups.
\newblock {\em Rev. Mat. Iberoam.}, 29(2):715--737, 2013.

\end{thebibliography}

\textsc{\newline Yuval Peres \newline One Microsoft Way, Redmond, WA 98052} \newline \textit{E-mail address:} peres@microsoft.com 

\textsc{\newline Tianyi Zheng \newline Department of Mathematics, UC San Diego, 9500 Gilman Dr. La Jolla, CA 92093} \newline \textit{E-mail address:} tzheng2@math.ucsd.edu 
\end{document}